\pgfplotsset{compat=1.15}
\date{\today}
\newtheorem{theorem}{Theorem}[section]
\newtheorem{corollary}[theorem]{Corollary}
\newtheorem{definition}[theorem]{Definition}
\newtheorem{example}[theorem]{Example}
\newtheorem{lemma}[theorem]{Lemma}
\newtheorem{proposition}[theorem]{Proposition}
\begin{document}
\title[Dirac series of $U(n,2)$]{Dirac series of $U(n,2)$}
\author{Lin Wei} \author{Chengyu Du} \author{Hongfeng Zhang} \author{Xuanchen Zhao}

\address{Department of Mathematics, Soochow University}
\email{1678728903@qq.com}

\address{Department of Mathematics, Soochow University}
\email{dcystory@163.com}

\address{Department of Mathematics, the University of Hong Kong, Pokfulam, Hong Kong, China}
\email{zhanghongf@pku.edu.cn}

\address{School of Science and Engineering, the Chinese University of Hong Kong, Shenzhen}
\email{116010313@link.cuhk.edu.cn}

\begin{abstract}
We determine the Dirac series of $U(n,2)$ and give related information about Dirac series: the spin lowest $K$-types are multiplicity free, and the Dirac cohomology are consistent with the Dirac index.
\end{abstract}

\maketitle

\section{Introduction}

The Dirac operator was introduced by Dirac as a square root of the Laplacian operator. Later, Parthasarathy \cite{Parthasarathy1972} defined a Dirac operator over $G/K$ in a similar way, and used it to understand the discrete series and gave an inequality, known as Dirac inequality, as a necessary condition for a representation to be unitarizable. To understand the unitary dual of a real reductive Lie group better, Vogan formulated the notion of Dirac cohomology in 1997 in MIT seminar and conjectured that whenever nonzero, the Dirac cohomology should reveal the infinitesimal character of the original module, which can be regarded as a refinement of Dirac inequality. This conjecture was confirmed by Huang and Pand\v zi\'c \cite{HP2002} in 2002;
see Theorem \ref{thm.HP.condition} below. Since then, the Dirac cohomology became a new invariant for unitary representations of real reductive Lie groups, and classifying all the irreducible unitary representations (up to equivalence) with nonvanishing Dirac cohomology, which are called the \emph{Dirac series}, became an interesting problem that remained open.

Assume that $G$ and $K$ are of equal rank, given any $(\mathfrak{g},K)$-module, from Dirac cohomology (could be zero), one can get the Dirac index, which is defined as the difference between ``odd" and ``even" part of the Dirac cohomology. The Dirac index was found to be easier to understood, which behaved well under coherent continuation. However, the cancellation between the even part and the odd part of Dirac cohomology may happen, see \cite{DDW23} about $E_{7(7)}$. Assume that $G$ is of Hermitian symmetric type, the Dirac cohomology of its unitary representations can be identified with the $\mathfrak{p}^+$-homology or $\mathfrak{p}^-$-cohomology up to a twist, see \cite{HPR2006}. As an application, the Dirac cohomology of the highest weight modules, which may be not unitary, could be computed via the Kazhdan-Lusztig polynomials, see \cite{HX12}. We will work out all Dirac series of $U(n,2)$, which is a group of Hermitian symmetric type. One might hope a well understanding of Dirac cohomology could be worthwhile for understanding the Shimura variety.

Recently, Wong \cite{Wong24} made important progress on the unitary dual of the group $U(p,q)$. Soon, Wong-Zhang  \cite{WongZhang24} classified the unitary dual for $U(n,2)$. In this paper, we will describe all the Dirac series for $U(n,2)$. The paper is formulated as follows. In section 2, we explain the irreducible representations of $U(n,2)$ using  combinatorial methods. In section 3, we show some fundamental results for the Dirac series. In section 4, we list all fully supported representations with nonzero Dirac cohomology as building blocks to construct all the Dirac series. In section 5, we compute the spin lowest $K$-types. From the results of Dirac index in \cite[Section 4]{DongWong21}, we verify the non-cancellation when passing Dirac cohomology to Dirac index.

\section{Irreducible $(\mathfrak{g},K)$-modules of $U(p,q)$}

By Langlands classification, all irreducible $(\mathfrak{g},K)$-modules of real reductive groups can be realized as a subquotient of an induced module from a discrete series twisted with a (non-unitary) character. Using the cohomological induction, one can construct the discrete series by taking cohomological induction from a character. Hence, one can realize all irreducible $(\mathfrak{g},K)$-modules as a subquotient of the parabolic induction of some module coming from cohomological induction. 

By \cite[Theorem 6.6.15]{V81}, one can exchange the order of 
the real parabolic induction and the cohomological induction under some conditions. Hence, one can first do the parabolic induction and then the cohomological induction.
For the purpose of classifying unitary dual, one would like to use the latter way to construct the modules, and also call it the Langlands classification. 

\subsection{Combinatorial $\theta$-stable data}

Let $G=U(p,q)$ and $\mathfrak g$ be the complexification of the Lie algebra of $G$. The maximal compact subgroup of $G$ is $K\cong U(p)\times U(q)$. Let $\mathfrak g=\mathfrak k\oplus\mathfrak p$ be the corresponding Cartan decomposition on the Lie algebra level. Denote $T$ to be a maximal torus in $K$ and $\mathfrak t$ to be its complexified Lie algebra. There are naturally two root systems, $\Delta(\mathfrak g,\mathfrak t)$ for $\mathfrak g$ and $\Delta(\mathfrak k,\mathfrak t)$ for $\mathfrak k$. Choose positive roots so that 
 $\Delta^+(\mathfrak k,\mathfrak t)\subset\Delta^+(\mathfrak g,\mathfrak t)$. Denote $\rho$ (resp. $\rho_c$) to be the half sum of positive roots of $\mathfrak g$ (resp. $\mathfrak k$). For a $\theta$-stable parabolic subalgebra $\mathfrak{q}=\mathfrak{l}\oplus\mathfrak{u}$, denote $\rho(\mathfrak u)$ to be the half sum of roots in $\mathfrak u$, and same to $\rho(\mathfrak u\cap \mathfrak p)$.

The Langlands classification has a combinatorial realization for the case of $U(p,q)$, see \cite{Wong24} for more details. Moreover, the unitary dual of $U(n,2)$ is completely understood using the combinatorial data, see \cite{WongZhang24}. Briefly speaking, any irreducible $(\mathfrak g,K)$-modules $X$ can be constructed from a pair $(\lambda_a(\delta),\nu)$. Here $\delta$ is a lowest $K$-type of $X$, and $\lambda_a(\delta)$ is expressed by a $\lambda_a$-datum which is obtained from $\delta$ through a specific algorithm, see \cite{Wong24}. In this subsection, we will explain the format of the combinatorial method in \cite{Wong24} so that we can quote the results in \cite{WongZhang24}.

We start with $\gamma$-blocks which are constituents of $\lambda_a$-datum. The diagram defined below will be used to represent the lowest $K$-type.

\begin{definition}\label{def.gamma.block}
    Let $\gamma$ be in $\frac{1}{2}\mathbb Z$, and let $G=U(p,q)$ with $p+q\equiv \varepsilon$ $(\emph{mod}$ $ 2)$ $(\varepsilon=0$ or $1)$. A $\gamma$\textbf{-block} of size $(r,s)$, where $0\leqslant r\leqslant p$, $0\leqslant s\leqslant q$ are non-negative integers satisfying $|r-s|\leqslant 1$, is one of the following shapes with $\gamma$ attached to them.
\end{definition}

\begin{itemize}
\item {\bf Rectangle} of size $(r,r)$:
\begin{center}
\begin{tikzpicture}
\draw
    (0,0) node {\Large $\gamma$}
 -- (0.5,0) node {\Large $\gamma$}		
 -- (1,0) node {\Large $\ldots$}
 -- (1.5,0) node {\Large $\gamma$}
 -- (2,0) node {\Large $\gamma$}
 -- (2,-1) node {\Large $\gamma$}
 -- (1.5,-1) node {\Large $\gamma$}
-- (1,-1) node {\Large $\ldots$}
-- (0.5,-1) node {\Large $\gamma$}
-- (0,-1) node {\Large $\gamma$}
 -- cycle;
\end{tikzpicture} 
\end{center}
where $\gamma + \frac{\epsilon}{2} \in \mathbb{Z}$.

\item {\bf Parallelogram} of size $(r,r)$:
\begin{center}
\begin{tikzpicture}
\draw
    (0,0) node {\Large $\gamma$}
 -- (0.5,0) node {\Large $\gamma$}		
 -- (1,0) node {\Large $\ldots$}
 -- (1.5,0) node {\Large $\gamma$}
 -- (2,0) node {\Large $\gamma$}
 -- (2.5,-1) node {\Large $\gamma$}
 -- (2,-1) node {\Large $\gamma$}
-- (1.5,-1) node {\Large $\ldots$}
-- (1,-1) node {\Large $\gamma$}
-- (0.5,-1) node {\Large $\gamma$}
 -- cycle;
\end{tikzpicture} or 
\begin{tikzpicture}
\draw
    (0,0) node {\Large $\gamma$}
 -- (0.5,0) node {\Large $\gamma$}		
 -- (1,0) node {\Large $\ldots$}
 -- (1.5,0) node {\Large $\gamma$}
 -- (2,0) node {\Large $\gamma$}
 -- (1.5,-1) node {\Large $\gamma$}
 -- (1,-1) node {\Large $\gamma$}
-- (0.5,-1) node {\Large $\ldots$}
-- (0,-1) node {\Large $\gamma$}
-- (-0.5,-1) node {\Large $\gamma$}
 -- cycle;
\end{tikzpicture},
\end{center}
where $\gamma + \frac{\epsilon + 1}{2} \in \mathbb{Z}$.

\item {\bf Trapezoid} of size $(r,r-1)$ or $(r,r+1)$:
\begin{center}
\begin{tikzpicture}
\draw
    (0,0) node {\Large $\gamma$}
 -- (0.5,0) node {\Large $\gamma$}		
 -- (1,0) node {\Large $\ldots$}
 -- (1.5,0) node {\Large $\gamma$}
 -- (2,0) node {\Large $\gamma$}
 -- (1.5,-1) node {\Large $\gamma$}
-- (1,-1) node {\Large $\ldots$}
-- (0.5,-1) node {\Large $\gamma$}
 -- cycle;
\end{tikzpicture} or 
\begin{tikzpicture}
\draw
 (0.5,0) node {\Large $\gamma$}		
 -- (1,0) node {\Large $\ldots$}
 -- (1.5,0) node {\Large $\gamma$}
 -- (2,-1) node {\Large $\gamma$}
 -- (1.5,-1) node {\Large $\gamma$}
-- (1,-1) node {\Large $\ldots$}
-- (0.5,-1) node {\Large $\gamma$}
-- (0,-1) node {\Large $\gamma$}
 -- cycle;
\end{tikzpicture},
\end{center}
where $\gamma + \frac{\epsilon + 1}{2} \in \mathbb{Z}$.

\end{itemize}

\begin{definition}\label{def.lambda.datum}
A $\lambda_a$\textbf{-datum} for $U(p,q)$ is a collection of $\gamma_i$-blocks of sizes $(r_i,s_i)$ such that $\sum r_i=p$, $\sum s_i=q$ and all $\gamma_i$ are distinct.
\end{definition}

The following theorem shows the usage of the $\lambda_a$-datum, which is the well-known
bijection between $\widehat{K}$ and all tempered representations with real infinitesimal characters, see also \cite[Section 3]{Wong24}. 
\begin{theorem}\label{thm-datum}
    Let $G = U(p,q)$. There is a $1:1$-correspondence between the following sets:
\begin{itemize}
    \item[(a)] All $\lambda_a$-data of $G$.
    \item[(b)] All $K$-types $\delta \in \widehat{K}$.
    \item[(c)] All tempered representations with real infinitesimal characters.
\end{itemize}
\end{theorem}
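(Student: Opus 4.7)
My plan is to split the $1{:}1$ correspondence into the two bijections (a)$\leftrightarrow$(b) and (b)$\leftrightarrow$(c), establishing each separately.

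For (b)$\leftrightarrow$(c), I would invoke Vogan's classification: every irreducible tempered representation of $G = U(p,q)$ with real infinitesimal character arises as a cohomologically induced module $A_\mathfrak{q}(\lambda)$, where $\mathfrak{q} = \mathfrak{l}\oplus\mathfrak{u}$ is a $\theta$-stable parabolic whose Levi is a product $\prod_i U(r_i,s_i)$ and $\lambda$ is a one-dimensional unitary character in the good range. Such a module admits a unique lowest $K$-type by Vogan's theory, two such modules are isomorphic iff they share the same lowest $K$-type, and every $K$-type arises this way. This is standard (see \cite[Ch.\ 6]{V81}) and gives the middle bijection.

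For (a)$\leftrightarrow$(b), I would describe the combinatorial dictionary explicitly. From a $\lambda_a$-datum $\{(\gamma_i, r_i, s_i)\}$, assemble a $K$-type by reading the top-row entries of each $\gamma_i$-block into the $U(p)$-part of the highest weight and the bottom-row entries into the $U(q)$-part, ordering the blocks by decreasing $\gamma_i$. The conditions $\sum r_i = p$, $\sum s_i = q$, and distinctness of the $\gamma_i$ guarantee that the result is a dominant weight, and the parity conditions on $\gamma$ (differing between rectangles and parallelograms/trapezoids) are precisely what ensures all resulting coordinates are integers of the correct type. The inverse is Vogan's algorithm as recalled in \cite{Wong24}: shift the $U(p)$- and $U(q)$-coordinates of the highest weight by the compact $\rho$-shifts, merge into a single multiset, and group equal values into blocks whose shape records how many coordinates at that value come from each side.

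The main obstacle is the case analysis verifying that the three block shapes together with their parity constraints cover every possible alignment pattern of $U(p)$- and $U(q)$-contributions at a common value, without overlap or omission, and that the output of this combinatorial procedure matches Vogan's algorithm step by step. The constraint $|r_i-s_i|\le 1$ must be shown to come out automatically from the strict decrease of the shifted sequences; the interaction between the parity of $p+q$ (encoded by $\epsilon$) and the shape-dependent parity of $\gamma$ also requires careful handling. Once this dictionary is in place, composing with (b)$\leftrightarrow$(c) yields the full theorem.
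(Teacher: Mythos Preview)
The paper does not give its own proof of this theorem: it is stated as well-known background, with a pointer to \cite[Section 3]{Wong24} for the combinatorics and implicitly to \cite{V81} for the bijection between $\widehat{K}$ and tempered representations with real infinitesimal character. So there is nothing to compare against beyond the citations.

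Your outline is essentially the standard argument those references carry out. One small inaccuracy: in your forward map (a)$\to$(b) you say one reads the top and bottom rows of the blocks directly as the highest weight of the $K$-type. That is not quite right; the block entries $\gamma_i$ record the coordinates of the $\lambda_a$-value, not the highest weight of $\delta$ itself, and one must undo the $\rho_c$-shift (and the finer combinatorics of Vogan's algorithm) to recover the actual highest weight. You clearly know this, since your description of the inverse map includes the shift, but the forward direction as stated is off. Apart from this, your identification of the delicate points (that $|r_i-s_i|\le 1$ emerges from strict decrease of the shifted sequences, and that the shape/parity dichotomy matches the parity of $p+q$) is accurate and is exactly what \cite{Wong24} works through.
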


If a $\lambda_a$-datum is obtained from a $K$-type $\delta$, we write the datum as $\lambda_a(\delta)$. A $\lambda_a$-datum determines a $\theta$-stable quasisplit parabolic subalgebra $\mathfrak q_0=\mathfrak l_0\oplus\mathfrak u_0$. Suppose a $\lambda_a$-datum consists of $\gamma_i$-blocks ($1\leqslant i\leqslant m$) of size $(r_i,s_i)$ with $\gamma_i>\gamma_{i+1}$. Then the Levi subalgebra $\mathfrak l_0$ is $\mathfrak u(r_1,s_1)\oplus\cdots\oplus\mathfrak u(r_m,s_m)$. The core notion we will frequently use in this paper is the combinatorial $\theta$-stable datum defined as follows.

\begin{definition}
    Let $G=U(p,q)$, and $X$ be an irreducible Hermitian $(\mathfrak{g}, K)$-module with a lowest $K$-type $\delta$ and real infinitesimal character $\Lambda(X)=(\lambda_a(\delta);\nu)$, where $\lambda_a(\delta)\in \mathfrak{t}^{\prime\ast}$ and $\nu\in\mathfrak{a}^{\prime\ast}$, and $\mathfrak{t}^{\prime}\oplus \mathfrak{a}^{\prime}$ is the complexification of a maximal non-compact Cartan subalgebra of $\mathfrak{g}_0$. A \textbf{combinatorial $\theta$-stable datum} attached to $X$ is given by the following two components:\\
\indent (1) A $\lambda_a$-datum determined by $\delta$; and \\
\indent (2) For each $\gamma_i$-block of size $(r_i,s_i)$ in (1), an element $\nu_i\in\mathbb R^{\min\{r_i,s_i\}}$ of the form $\nu_i=(\nu_{i,1}\geqslant \nu_{i,2} \geqslant \cdots \geqslant -\nu_{i,2}\geqslant -\nu_{i,1})$ $($the $\nu_i$'s are called the $\nu$-\textbf{coordinates corresponding to the $\gamma_i$ -block}$)$, so that $\nu=(\nu_1,\nu_2,\cdots)$ up to conjugacy.
\end{definition}

Each combinatorial datum attached to $X$ determines an induced $(\mathfrak g,K)$-module, so
that $X$ is the irreducible subquotient of the induced module containing $\delta$ as a lowest $K$-type. Here we give an example of a combinatorial $\theta$-stable datum, and we will mainly explain how to rewrite the infinitesimal character from a datum. 

\begin{example} \label{eg-lambdaa2}

Let $G = U(6,3)$, and consider the combinatorial $\theta$-stable datum with the $\nu$-coordinates as follows. For the $1$-block, let $\nu_1=(\frac{3}{4},\frac{1}{2},-\frac{1}{2},-\frac{3}{4})$, and for $0$-block, let $\nu_2=(\frac{1}{3},-\frac{1}{3})$. We will describe the $\nu$-coordinates by a curved arrow with $\nu$-coordinates starting from the corresponding block:
\begin{center}
\begin{tikzpicture}
\draw
    (0,0) node {\Large $1$}
 -- (1,0) node {\Large $1$}
-- (0.7,-1) node {\Large $1$}
-- (-0.3,-1) node {\Large $1$}
 -- cycle;

\draw
    (1.5,0) node {\Large $0$}
 -- (2.5,0) node {\Large $0$}
-- (2.3,-1) 
-- (2,-1) node {\Large $0$}
-- (1.7,-1) 
 -- cycle;

\draw
    (2.8,0) 
 -- (3.25,0) node {\Large $-1$}		
-- (3.7,0)
 -- (3.4,-1) 
-- (3,-1) 
 -- cycle;

\draw
    (3.8,0) 
 -- (4.25,0) node {\Large $-2$}		
-- (4.7,0)
 -- (4.4,-1) 
-- (4,-1) 
 -- cycle;

\draw[arrows = {-Stealth[]}]          (0,-1)   to [out=-90,in=-90]node[below]{$\frac{3}{4},\frac{1}{2}$} (-1,-1);
\draw[arrows = {-Stealth[]}]          (0.4,-1)   to [out=-90,in=-90]node[below]{$\frac{-1}{2},-\frac{-3}{4}$} (1.4,-1);

\draw[arrows = {-Stealth[]}]          (1.8,0)   to [out=90,in=90]node[above]{$\frac{1}{3}$} (1.1,0);
\draw[arrows = {-Stealth[]}]          (2.2,0)   to [out=90,in=90]node[above]{$\frac{-1}{3}$} (2.9,0);
\end{tikzpicture}.
\end{center}
Then it defines a $\theta$-stable quasisplit parabolic subalgebra 
\begin{equation} \label{eq-thetaeg}
\mathfrak{q}_0 = \mathfrak{l}_0 \oplus \mathfrak{u}_0, \quad \quad \mathfrak{l}_0 = \mathfrak{u}(2,2) \oplus \mathfrak{u}(2,1) \oplus \mathfrak{u}(1,0) \oplus \mathfrak{u}(1,0).
\end{equation}
The corresponding module has infinitesimal character
$$
\Lambda=\lambda_a+\nu=\left(\left.1+\frac{3}{4},1+\frac{1}{2},0+\frac{1}{3},0,-1,-2\right|1-\frac{1}{2},1-\frac{3}{4},0-\frac{1}{3}\right),
$$
and the arrows show how the addition works.
\end{example}

\subsection{Partitions of combinatorial $\theta$-stable data}

Given a combinatorial $\theta$-stable datum $\mathcal D=\coprod (\gamma_i$-block), a partition of $\mathcal D$ is a regroup of the blocks in which only consecutive blocks are grouped together in the partition. Using the notion of partitions of a datum, \cite{Wong24} described the condition for a representation to be fully supported, see Lemma \ref{lemma.fully.supported.condition} below. And in \cite{WongZhang24} the authors reduce the problem of the unitary dual of $U(n,2)$ to fundamental cases.

Let $X$ be an irreducible, Hermitian $(\mathfrak g,K)$-module with real infinitesimal character and a lowest $K$-type $\delta$. All $\theta$-stable parabolic subalgebras $\mathfrak q^\prime$ containing the quasisplit parabolic subalgebra $\mathfrak q$ defined by $\lambda_a(\delta)$ are in 1-1 correspondence with the set of partitions of the $\lambda_a$ in the combinatorial $\theta$-stable datum of $X$. As a consequence, all such $\theta$-stable parabolic subalgebras $\mathfrak q^\prime \supset \mathfrak q$ correspond to a partition $\mathcal D=\coprod_i^k D_i$ of the $\theta$-stable datum of $X$, and the infinitesimal character $\Lambda=(\lambda_a(\delta),\nu)$ of $X$ is also partitioned into $(\Lambda_1,\cdots,\Lambda_k)$ up to a permutation of coordinates. Define the $i^{th}$-\emph{segment} of the partition by the line segment $[e_i,b_i]$ where $e_i$ (resp. $b_i$) is the largest (resp. smallest) number in $\Lambda_i$.

\begin{example}
    In example \ref{eg-lambdaa2}, the following is partition $\mathcal D=\mathcal D_1\coprod \mathcal D_2$ of the datum:
$$\mathcal D_1:=\{1\text{-}\mathrm{block},0\text{-}\mathrm{block} \}\qquad
\mathcal D_2:=\{(-1)\text{-}\mathrm{block},(-2)\text{-}\mathrm{block} \}.$$
Notice that $\nu$ is $(\frac{3}{4},\frac{1}{2})$ for the 1-block and $\frac{1}{3}$ for the 0-block. The infinitesimal character is partitioned into
$$\Lambda_1=\left(
1+\frac{3}{4},1+\frac{1}{2},1-\frac{1}{2},1-\frac{3}{4},0+\frac{1}{3},0,0-\frac{1}{3}
\right)=
\left(
\frac{7}{4},\frac{3}{2},\frac{1}{2},\frac{1}{3},\frac{1}{4},0,-\frac{1}{3}
\right);$$
$$\Lambda_2=(-1,-2).$$
Hence the segments are $[e_1,b_1]=[\frac{7}{4},-\frac{1}{3}]$ and $[e_2,b_2]=[-1,-2]$.
\end{example}

\begin{lemma}\label{lemma.fully.supported.condition}\cite[Section 7]{Wong24}
Retain the setting in the above paragraphs. Then $X$ is cohomologically induced from a $\theta$-stable parabolic subalgebra $\mathfrak q^\prime$ in the weakly good range if and only if the segments of the partition of the combinatorial $\theta$-stable datum of $X$ corresponding to $\mathfrak q^\prime$ satisfy the following inequalities:
$$e_1\geqslant b_1\geqslant e_2\geqslant b_2\geqslant \cdots\geqslant e_k\geqslant b_k.$$
\end{lemma}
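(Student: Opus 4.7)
The plan is to translate the weakly good range condition on $\mathfrak q'$ into the stated chain of inequalities, via the bijection between partitions of the combinatorial $\theta$-stable datum and $\theta$-stable parabolics $\mathfrak q'\supset \mathfrak q_0$.

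First, I would unravel the combinatorics on the inducing side. Given the partition $\mathcal D=\coprod_{i=1}^{k} D_i$, denote by $(p_i,q_i)$ the total size of the blocks in $D_i$; then $\mathfrak q'=\mathfrak l'\oplus \mathfrak u'$ with $\mathfrak l'=\bigoplus_{i=1}^{k}\mathfrak u(p_i,q_i)$, and $\mathfrak u'$ consists of those root spaces $e_a-e_b$ for which the index $a$ lies in a factor appearing before the factor containing $b$. On each $\mathfrak u(p_i,q_i)$, I would construct a smaller $(\mathfrak l'_i,L'_i\cap K)$-module $Z_i$ by applying the combinatorial $\theta$-stable datum construction (Theorem \ref{thm-datum} on the Levi) to the sub-datum $D_i$, and set $Z=\boxtimes_i Z_i$. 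The goal is then to show $X=\mathcal R_{\mathfrak q'}(Z)$ precisely when $\mathfrak q'$ is weakly good for $Z$.

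Second, I would unpack the weakly good condition. If $\lambda_L$ is the infinitesimal character of $Z$, then $\mathcal R_{\mathfrak q'}(Z)$ has infinitesimal character $\Lambda=\lambda_L+\rho(\mathfrak u')$, and the weakly good range is $\langle\Lambda,\alpha\rangle\geqslant 0$ for all $\alpha\in\Delta(\mathfrak u',\mathfrak h)$. In the standard $U(p,q)$ coordinates these roots are exactly the differences $e_a-e_b$ with $a$ in an earlier factor than $b$. Hence the weakly good condition translates precisely to: every coordinate of $\Lambda$ indexed by $D_i$ is at least every coordinate indexed by $D_j$ whenever $i<j$, i.e.\ $b_i\geqslant e_{i+1}$ for each $i$.

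Third, I would assemble the two halves. The inter-segment inequalities $b_i\geqslant e_{i+1}$, combined with the trivial $e_i\geqslant b_i$ from the definitions of $e_i,b_i$, yield the full chain $e_1\geqslant b_1\geqslant\cdots\geqslant e_k\geqslant b_k$. Conversely, if the chain holds, then $\mathfrak q'$ is weakly good for $Z$; $\mathcal R_{\mathfrak q'}(Z)$ is irreducible with infinitesimal character $\Lambda$, and a standard bottom-layer $K$-type argument identifies $\delta$ as its lowest $K$-type. Since $X$ is determined by $(\delta,\Lambda)$, the induced module equals $X$.

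The main obstacle I expect is the bookkeeping needed to match coordinates. As Example \ref{eg-lambdaa2} illustrates, the $\nu$-coordinates are attached to blocks via curved arrows and can interleave with $\lambda_a$-coordinates coming from different blocks, so one has to check carefully that after the $\rho(\mathfrak u')$-shift the segment endpoints $e_i,b_{i+1}$ really coincide with the maximum and minimum of $\Lambda$ on the corresponding coordinate subsystem—and not with some shifted versions. Once that shift is pinned down correctly, the reduction to the weakly good root inequalities is formal.
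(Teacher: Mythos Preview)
The paper does not supply its own proof of this lemma; it is quoted from \cite[Section 7]{Wong24} and used as a black box. So there is no argument in the paper to compare your proposal against.

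That said, your outline is essentially the standard unwinding one would expect, and it is correct in spirit. One comment: your anticipated ``main obstacle'' about the $\rho(\mathfrak u')$-shift is not actually an obstacle. The weakly good condition $\langle \lambda_L+\rho(\mathfrak u'),\alpha\rangle\geqslant 0$ for $\alpha\in\Delta(\mathfrak u',\mathfrak h)$ is a condition on $\Lambda=\lambda_L+\rho(\mathfrak u')$, which is already the infinitesimal character of $X$. Since the paper defines the segments $[e_i,b_i]$ directly as the max and min of the coordinates of $\Lambda_i$ (the piece of $\Lambda$ indexed by $D_i$), there is no shift left to track: the root inequalities for $\alpha=e_a-e_b$ with $a$ in $D_i$ and $b$ in $D_{i+1}$ literally say $b_i\geqslant e_{i+1}$. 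The only genuine content beyond this bookkeeping is, as you note, the converse direction: one needs that in the weakly good range the cohomologically induced module is irreducible with lowest $K$-type $\delta$, so that it must coincide with $X$. That is the standard Vogan--Zuckerman input.
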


If an irreducible module is not fully supported, then it can be obtained by cohomological induction from a proper $\theta$-stable parabolic subalgebra in the weakly good range. Equivalently, if an irreducible module cannot be obtained by cohomological induction from a proper $\theta$-stable parabolic subalgebra in the weakly good range, then it must be fully supported. The definition of being fully supported is related to the KGB-elements which are frequently used in the software \emph{atlas}. In the language of \emph{atlas}, an irreducible representation can be realized by a triple $(x,\lambda,\nu)$, where $x$ is a KGB-element. If the support of the KGB-element contains all simple roots of $G$, then the representation is called fully supported.

\begin{definition}
    Let $G$ be any real connected reductive group. An irreducible, Hermitian $(\mathfrak g,K)$-module with real infinitesimal character is called \textbf{fundamental} if its $\lambda_a$-value satisfies $\langle \lambda_a,\alpha\rangle\leqslant1$ for all simple roots $\alpha\in \Delta(\mathfrak g,\mathfrak h)$.
\end{definition}

When $G=U(p,q)$, there is an equivalent definition in the sense of combinatorial $\theta$-stable datum.

\begin{definition}
    Let $G=U(p,q)$. A $\lambda_a$-datum or a combinatorial $\theta$-stable datum is called \textbf{fundamental} if the contents of all its neighboring $\lambda_a$-blocks have differences $\leqslant 1$.
\end{definition}

It is clear that there is a unique way to write the $\lambda_a$-data as a union of minimal number of fundamental data. According to such partition, one has the following theorem.

\begin{theorem}\cite[Section 4]{WongZhang24}\label{thm.weakly.good.ind.unitary}
    Let $G=U(n,2)$, and $X$ be a Hermitian, irreducible module with real infinitesimal character. Suppose its corresponding combinatorial $\theta$-stable datum $\mathcal D=\coprod_{i=1}^k \mathcal F_i$ is partitioned into fundamental data. Then $X$ is unitary if and only if each fundamental datum $\mathcal F_i$ corresponds to a unitary module.
\end{theorem}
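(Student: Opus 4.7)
The plan is to realize $X$ as a cohomological induction from the Levi of the $\theta$-stable parabolic determined by the fundamental partition, and then invoke that cohomological induction in the weakly good range both preserves and reflects unitarity.

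First I would apply Lemma~\ref{lemma.fully.supported.condition}, which associates to the partition $\mathcal D = \coprod_{i=1}^k \mathcal F_i$ a $\theta$-stable parabolic $\mathfrak q' = \mathfrak l' \oplus \mathfrak u' \supseteq \mathfrak q$ with Levi $L' \cong U(r_1, s_1) \times \cdots \times U(r_k, s_k)$, where $(r_i, s_i)$ records the total block size of $\mathcal F_i$. Each $\mathcal F_i$ together with its $\nu$-coordinates determines an irreducible Hermitian module $X_i$ on the corresponding Levi factor, and $X$ is the irreducible subquotient of $\mathcal R_{\mathfrak q'}^S(X_1 \boxtimes \cdots \boxtimes X_k)$ containing the prescribed lowest $K$-type.

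The next step is to check that the segments $[e_i, b_i]$ of the fundamental partition satisfy $b_i \geqslant e_{i+1}$ for all $i$, so that Lemma~\ref{lemma.fully.supported.condition} places us in the weakly good range. Because consecutive maximal fundamentals have $\gamma$-content gap strictly larger than $1$, and because $G = U(n,2)$ forces each Levi factor $U(r_i, s_i)$ to have $s_i \leqslant 2$, the $\nu$-coordinates come in short strings attached to small blocks, so the content gap should absorb the $\nu$-contributions on either side. The details will require a case analysis over the block shapes (rectangle, parallelogram, trapezoid) and the possible values of $\min(r_i, s_i) \in \{0, 1, 2\}$; this verification is the main technical obstacle, and one may need to invoke the bounds on $\nu$-coordinates that follow from Hermitian symmetry on each small Levi factor.

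Once the weakly good range is established, the conclusion follows from two standard results. Cohomological induction in the weakly good range sends unitary modules to unitary modules; conversely, in this range it also reflects unitarity, as one sees via signature characters or the Knapp--Vogan bottom-layer argument. Since $X_1 \boxtimes \cdots \boxtimes X_k$ is a tensor product on a product group, it is unitary if and only if each factor $X_i$ is unitary. Combining these gives the required biconditional: $X$ is unitary if and only if each fundamental datum $\mathcal F_i$ corresponds to a unitary module.
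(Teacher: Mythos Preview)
This theorem is not proved in the present paper; it is quoted from \cite[Section~4]{WongZhang24} and used as a black box. So there is no proof here to compare your proposal against.

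As for the proposal itself, the overall architecture (realize $X$ as cohomologically induced from $\bigotimes X_i$ along the parabolic determined by the fundamental partition, then use that weakly good cohomological induction both preserves and reflects unitarity) is the natural one, and the last paragraph is correct once weakly good range is established. The gap is in the step you flag as ``the main technical obstacle.'' The $\lambda_a$-content gap between consecutive fundamental pieces exceeds $1$, but the segment endpoints $b_i,e_{i+1}$ involve $\gamma\pm\nu$, not just $\gamma$. A Hermitian form on $X_i$ imposes only a symmetry condition on $\nu$ (essentially that $\nu$ is real), not a size bound; so ``bounds on $\nu$-coordinates that follow from Hermitian symmetry'' will not by themselves prevent $\Theta-\nu_\Theta$ from dipping below the top content of the next fundamental piece. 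In particular, for the direction ``$X$ unitary $\Rightarrow$ each $X_i$ unitary'' you have no a priori control on $\nu$ from the $X_i$ side, so you cannot assume the segment inequalities and then invoke reflection of unitarity --- that is circular.

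What actually closes this gap in \cite{WongZhang24} is specific to $U(n,2)$: the fundamental pieces have $s_i\in\{0,1,2\}$, and for $s_i\le 1$ the unitary/Hermitian analysis of $U(m,1)$ (complementary series bounds) together with the explicit $U(n,2)$ classification gives the needed control, while for $s_i=2$ the full Section~4 analysis of the fundamental rectangle is used. Your case analysis over block shapes is the right instinct, but you should expect to import those explicit $\nu$-bounds from the $U(m,1)$ and $U(m,2)$ unitary dual rather than derive them from Hermitian symmetry alone.
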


The decomposition in fact realizes how $X$ is constructed by the cohomological induction. Firstly, we locate the $\theta$-stable parabolic subalgebra $\mathfrak q$. Suppose the size of the $i^{th}$-segment is $(r_i,s_i)$, then the $i^{th}$ factor of the Levi subalgebra $\mathfrak l_0$ is $\mathfrak u(r_i,s_i)$. Secondly, we locate the representation $\pi$ on $L\cap K$. The restriction of $\pi$ on the $i^{th}$ Levi factor is the one corresponding to the underlying combinatorial $\theta$-stable datum $\mathcal F_i$.

\subsection{Fundamental cases of $U(n,1)$ and $U(n,2)$}

The notion of fundamental representations is introduced in \cite{Wong24} in order to prove Vogan's fundamental parallelepiped conjecture. The conjecture is proved firstly in the fundamental modules and then in the general cases. In the case of $U(n,2)$ and $U(n,1)$, fully supported representations are fundamental.

In \cite{WongZhang24}, there is a full list of  unitary fundamental $\lambda_a$-data for $U(n,1)$ and $U(n,2)$, together with the equivalent conditions on $\nu$. We will only list below the unitary fundamental cases which are  fully supported.

\begin{example}\label{eg.U(n1).fundamental.cases}
The following datum includes most of the fundamental cases for $U(n,1)$.
    \begin{center}
    \begin{tikzpicture}
\foreach \y in {1, 2.3, 4, 5.3}
    \draw (\y,1)
    -- (\y+0.2,0)
    -- (\y+0.4,0)
    -- (\y+0.6,1)
    -- cycle;

\foreach \y in {2,5}
    \draw (\y,0.5) node {$\cdots$};

\draw (3.4,0.5) node {$\ast$};
\draw (3.4,1) node {$\gamma$};
\draw (1.3,1.2) node {$\alpha_1$};  \draw (2.6, 1.2) node {$\alpha_2$};
\draw (5.6,1.2) node {$\beta_1$};   \draw (4.3, 1.2) node {$\beta_2$};


\draw[arrows = {-Stealth[]}]          (3.2,1.1)   to [out=90,in=0]node[above]{$\nu$} (1.8,1.4);
\draw[arrows = {-Stealth[]}]          (3.6,1.1)   to [out=90,in=180]node[above]{$-\nu$} (5,1.4);

\end{tikzpicture}
\end{center}
Here the $\gamma$-block at $\ast$ is either a trapezoid of size $(2,1)$ or a rectangle of size $(1,1)$. We have
\begin{equation}
    \lambda_a=\big(~\alpha_1,\alpha_1-1,\cdots,\gamma,\cdots,\beta_1+1,\beta_1~\big|~\gamma~\big).
\end{equation}
The values on any two neighboring trapezoids of size $(1,0)$ have difference $=1$. When the $\gamma$-block is a trapezoid of size $(2,1)$, we have $|\gamma-\alpha_2|=|\gamma-\beta_2|=1$; and when the $\gamma$-block is a rectangle of size $(1,1)$, we have $|\gamma-\alpha_2|=|\gamma-\beta_2|=\frac{1}{2}$.

The corresponding $(\mathfrak{g},K)$-module is unitary when $0\leqslant \nu \leqslant \min\{ |\gamma-\alpha_1|+1, |\gamma-\beta_1|+1 \}$; And if it is further fully supported, it has to be the trivial module who has $\nu=|\gamma-\alpha_1|+1=|\gamma-\beta_1|+1$.

\end{example}

\begin{example}\label{eg.u(n2).fundamental.cases}
The following data include most of the fundamental cases for $U(n,2)$. Type (a) has $k+l+m$ $(1,0)$-blocks and two $(r,1)$-blocks where $r=1,2$, where at most one of them is a $(1,1)$-parallelogram. By tensoring a power of the 1-dimensional determinant representation, one can assume the first and last block have sum zero. 
\begin{center}
    \begin{tikzpicture}
\draw (-1,0.5) node {\emph{Type (a)}};

\foreach \y in {1, 2.3, 4, 5.3, 7, 8.3}
    \draw (\y,1)
    -- (\y+0.2,0)
    -- (\y+0.4,0)
    -- (\y+0.6,1)
    -- cycle;

\foreach \y in {2, 5, 8}
    \draw (\y,0.5) node {$\cdots$};

\draw (3.4,0.5) node {$\ast_1$};  \draw (6.4,0.5) node {$\ast_2$};
\draw (3.4,1) node {$\Theta$};  \draw (6.4,1) node {$\Phi$};
\draw (1.3,1.2) node {$\gamma$};
\draw (8.6,1.2) node {$-\gamma$};

\foreach \x in {1.3, 4.3, 7.3}
    \draw decorate [decoration={brace, amplitude=10pt, mirror}]{(\x,-0.2) -- (\x+1.3,-0.2)};

\draw (2,-0.8) node {$k$}; \draw (5,-0.8) node {$l$}; \draw (8,-0.8) node {$m$};

\end{tikzpicture}
\end{center}

Type (b) has $k+m$ $(1,0)$-blocks and one $(r,2)$-block $(r=2,3)$.

\begin{center}
    \begin{tikzpicture}
\draw (-1,0.5) node {\emph{Type (b)}};

\foreach \y in {1, 2.3, 4, 5.3}
    \draw (\y,1)
    -- (\y+0.2,0)
    -- (\y+0.4,0)
    -- (\y+0.6,1)
    -- cycle;

\foreach \y in {2, 5}
    \draw (\y,0.5) node {$\cdots$};

\draw (3.4,0.5) node {$\ast$};
\draw (3.4,1) node {$\Theta$};
\draw (1.3,1.2) node {$\gamma$};
\draw (5.6,1.2) node {$-\gamma$};

\foreach \x in {1.3, 4.3}
    \draw decorate [decoration={brace, amplitude=10pt, mirror}]{(\x,-0.2) -- (\x+1.3,-0.2)};

\draw (2,-0.8) node {$k$}; \draw (5,-0.8) node {$m$};

\end{tikzpicture}
\end{center}
For type (a), we use $\nu_\Theta$ and $\nu_\Phi$ to denote the arrows which come out from $\ast_1$ and $\ast_2$. For type (b), we may consider it as a special case of type (a), that is when $\Theta=\Phi$. There are two pairs of arrows coming out from $\ast$. We still use the notation $\nu_\Theta$ and $\nu_\Phi$ in this case.

Here are the conditions on $\nu$ when the combinatorial $\theta$-stable datum corresponds to an irreducible unitary representation. For a datum of type (a), let one of the $\ast$ positions be a $(1,1)$-parallelogram and the other $\ast$ position be a $(1,1)$-rectangle or a $(2,1)$-trapezoid. And suppose the parallelogram leans to the other $\ast$ position.

\begin{center}
    \begin{tikzpicture}

\foreach \y in {1, 2.3, 4, 5.3, 7, 8.3}
    \draw (\y,1)
    -- (\y+0.2,0)
    -- (\y+0.4,0)
    -- (\y+0.6,1)
    -- cycle;

\foreach \y in {2, 5, 8}
    \draw (\y,0.5) node {$\cdots$};

\draw (6.4,0.5) node {$\ast_2$};
\draw (3.6,1.2) node {$\Theta$};  \draw (6.4,1.2) node {$\Phi$};
\draw (1.3,1.2) node {$\gamma$};
\draw (8.6,1.2) node {$-\gamma$};

\foreach \y in {3.4}
    \draw (\y-0.1,1) -- (\y-0.3, 0) -- (\y+0.2, 0) -- (\y+0.4, 1) -- cycle;

\foreach \x in {1.3, 4.3, 7.3}
    \draw decorate [decoration={brace, amplitude=10pt, mirror}]{(\x,-0.2) -- (\x+1.3,-0.2)};

\draw (2,-0.8) node {$k$}; \draw (5,-0.8) node {$l$}; \draw (8,-0.8) node {$m$};

\end{tikzpicture}
\end{center} 

Without loss of generality, we assume $\ast_1$ is a $(1,1)$-parallelogram leaning to $\ast_2$, while $\ast_2$ is a $(1,1)$-rectangle or a $(2,1)$-trapezoid. When $\ast_2$ has size $(r,1)$, this datum corresponds to an irreducible unitarizable $(\mathfrak{g},K)$-module if and only if $\nu_\Theta=0$ and $0\leqslant \nu_\Phi \leqslant \min \{ l+1,m \}+\frac{r}{2}$. See \cite[Subsection 4.3]{WongZhang24} for details.

Suppose neither $\ast_1$ nor $\ast_2$ is a parallelogram, then the combinatorial $\theta$-stable datum corresponds to an irreducible unitary $(\mathfrak{g},K)$-module if and only if one of the following two conditions is satisfied:

(i) If $(\nu_\Theta,\nu_\Phi)$ is in the fundamental rectangle, see \cite[Subsection 4.2]{WongZhang24},
\begin{equation}\label{fundamental.rectangle}
    \left\{
    (\nu_{\Theta}, \nu_{\Phi}) \left|~0\leqslant \nu_{\Theta}\leqslant \frac{n-1}{2}-|\Theta|,~0\leqslant \nu_{\Phi}\leqslant \frac{n-1}{2}-|\Phi|\right.
    \right\},
\end{equation} 
then it is either in the triangle areas bounded by the inequalities
\begin{itemize} 
    \item $\{\nu_{\Theta} + \nu_{\Phi}\leqslant \Theta-\Phi+1\}$;  
    \item $\{\nu_{\Theta} + \nu_{\Phi} \leqslant \Theta-\Phi+k+1, \nu_{\Theta} - \nu_{\Phi}\geqslant \Theta-\Phi+k\}, k\in \mathbb{N}_+ $
    \item $\{\nu_{\Theta} + \nu_{\Phi} \leqslant \Theta-\Phi+k+1, \nu_{\Phi} - \nu_{\Theta}\geqslant \Theta-\Phi+k\}, k\in \mathbb{N}_+ $
\end{itemize}
or it is on the lines:
\begin{itemize}
    \item $\{\nu_{\Theta} - \nu_{\Phi}=\Theta-\Phi+k\}, k\in \mathbb{N}_+$. 
    \item $\{\nu_{\Phi} - \nu_{\Theta}=\Theta-\Phi+k\}, k\in \mathbb{N}_+$.
\end{itemize}
(ii) If $(\nu_\Theta,\nu_\Phi)$ is not in the fundamental rectangle
, then it is one of the following points (with $\Theta=\Phi$ or $\Theta\cdot\Phi>0$)
\begin{equation}
(\nu_{\Theta}, \nu_{\Phi}) =
 \begin{cases}
 (\frac{n-1}{2}-|\Theta|, \frac{n+1}{2}-|\Phi|);~
 (\frac{n+1}{2}-|\Theta|, \frac{n-1}{2}-|\Phi|) & \mathrm{if}\ \Theta = \Phi,\\
 (\frac{n-1}{2}-|\Theta|, \frac{n+1}{2}-|\Phi|) & \mathrm{if}\  \Theta > \Phi>0\\ (\frac{n+1}{2}-|\Theta|, \frac{n-1}{2}-|\Phi|) & \mathrm{if}\ 0 > \Theta >\Phi\\
\end{cases}.
\end{equation}
\end{example}

\begin{example}\label{eg.u22}
    Let $G=U(2,2)$. Consider the fundamental data
\begin{center}
\begin{tikzpicture}
\draw
    (-0.15,0) 
 -- (-0.3,0) node {\scriptsize $\frac{1}{2}$}
 -- (-0.45, 0) 
-- (-0.6,0) node {\scriptsize $\frac{1}{2}$}
-- (-0.75,0)
-- (-0.6,-0.5) 
-- (-0.45,-0.5) node {\scriptsize $\frac{1}{2}$}
-- (-0.3,-0.5)
-- (-0.15,-0.5) node {\scriptsize $\frac{1}{2}$}
-- (0,-0.5)
 -- cycle;

 \draw[arrows = {-Stealth[]}]          (-0.15,0)   to [out=90,in=00]node[above]{\scriptsize $-\nu_2,-\nu_1$} (0.25,0.3);
\draw[arrows = {-Stealth[]}]          (-0.75,0)   to [out=90,in=00]node[above]{\scriptsize $\nu_1,\nu_2$} (-1.5,0.3);
\end{tikzpicture}
or  \begin{tikzpicture}
\draw
    (0.15,0) 
 -- (0.3,0) node {\scriptsize $\frac{1}{2}$}
 -- (0.45, 0) 
-- (0.6,0) node {\scriptsize $\frac{1}{2}$}
-- (0.75,0)
-- (0.6,-0.5) 
-- (0.45,-0.5) node {\scriptsize $\frac{1}{2}$}
-- (0.3,-0.5)
-- (0.15,-0.5) node {\scriptsize $\frac{1}{2}$}
-- (0,-0.5)
 -- cycle;

 \draw[arrows = {-Stealth[]}]          (0.15,0)   to [out=90,in=00]node[above]{\scriptsize $\nu_1,\nu_2$} (-0.3,0.3);
\draw[arrows = {-Stealth[]}]          (0.75,0)   to [out=90,in=180]node[above]{\scriptsize $-\nu_2,-\nu_1$} (1.4,0.3);
\end{tikzpicture}
\end{center}
with $\nu_1\geqslant \nu_2\geqslant 0$. Then both data correspond to unitary modules if and only if $\nu_1+\nu_2\leqslant 1$. In particular, when $\nu_2>0$, they correspond to the same module. When $\nu_2=0, \nu_1=1$, they are fully supported $A_\mathfrak{q}(\lambda)$ modules. For the left one, the parabolic subalgebra $\mathfrak q$ has Levi subalgebra $\mathfrak u(2,1)\oplus\mathfrak u(0,1)$ and $\lambda=(0,0,0|2)$; for the right one, the parabolic subalgebra $\mathfrak q$ has Levi subalgebra $\mathfrak u(1,2)\oplus\mathfrak u(1,0)$ and also $\lambda=(0,0,0|2)$.

$U(2,2)$ has two more fully supported $A_\mathfrak q(\lambda)$ modules. One is the trivial representation. The other corresponds to combinatorial $\theta$-stable datum
\begin{center}
    \begin{tikzpicture}
\draw
    (-0.15,0) 
 -- (-0.15,0) node {\scriptsize $0$}
 -- (-0.45,0) 
-- (-0.45,0) 
-- (-0.75,0) node {\scriptsize $0$}
-- (-0.75,-0.5) 
-- (-0.75,-0.5) node {\scriptsize $0$}
-- (-0.15,-0.5)
-- (-0.15,-0.5) node {\scriptsize $0$}
-- (-0.15,-0.5)
 -- cycle;

 \draw[arrows = {-Stealth[]}]          (-0.15,0)   to [out=90,in=90]node[above]{\scriptsize $-\frac{1}{2},-\frac{1}{2}$} (0.6,0);
\draw[arrows = {-Stealth[]}]          (-0.75,0)   to [out=90,in=90]node[above]{\scriptsize $\frac{1}{2},\frac{1}{2}$} (-1.5,0);
\end{tikzpicture}
\end{center}
and $\mathfrak{q}$ has Levi subalgebra $\mathfrak{u}(1,1)\oplus\mathfrak{u}(1,1)$, and $\lambda=(-1,-1|1,1)$.

In total, after tensoring a power of the determinant representation, there are 4 fully supported unitary representations of $U(2,2)$.
\end{example}

\section{The Dirac cohomology and the finiteness theorem}

Let $U(\mathfrak g)$ be the universal enveloping algebra of $\mathfrak g$ and $C(\mathfrak p)$ be the Clifford algebra of $\mathfrak p$. Fix a standard nondegenerate bilinear form $B(\cdot, \cdot)$ on $\mathfrak g$, and take an orthonormal basis $Z_1,Z_2,\cdots,Z_{\dim \mathfrak p_0}$ of $\mathfrak p_0$. The Dirac operator is defined to be
\begin{equation*}
    D:=\sum Z_i\otimes Z_i\in U(\mathfrak g)\otimes C(\mathfrak p).
\end{equation*}
It is easy to check that $D$ does not depend on the choice of the orthonormal basis, and it is $K$-invariant for the diagonal action of $K$ given by adjoint actions on both factors. For any given $(\mathfrak g,K)$-module, the Dirac operator acts on the tensor product $X\otimes \mathrm{Spin}_G$, where $\mathrm{Spin}_G$ is the spin module of $C(\mathfrak{p})$. The \emph{Dirac cohomology} is defined as
$$H_D(X):=\ker D/(\mathrm{im} D\cap \ker D).$$
In the case when $X$ is unitary, $(\mathrm{im} D\cap \ker D)=0$ and hence $H_D(X)=\ker D^2=\ker D$. The Dirac cohomology, if not zero, gives the infinitesimal character of the original (irreducible) module.

\begin{theorem}\cite[Theorem 2.3]{HP2002}\label{thm.HP.condition}
    Let $X$ be an irreducible $(\mathfrak g,K)$-module. Assume that the Dirac cohomology of $X$ contains a $\widetilde K$-type $E_\mu$ of highest weight $\mu\in\mathfrak t^\ast$. Then the infinitesimal character of $X$ is $\mu+\rho_c$.
\end{theorem}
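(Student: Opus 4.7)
The plan is to reduce the theorem to a single algebraic identity in $U(\mathfrak{g}) \otimes C(\mathfrak{p})$ that governs how the center $Z(\mathfrak{g})$ interacts with $D$. The crucial ingredient, known as Vogan's conjecture, is the existence of an algebra homomorphism $\zeta : Z(\mathfrak{g}) \to Z(\mathfrak{k}_\Delta)$, where $\mathfrak{k}_\Delta$ denotes the diagonal copy of $\mathfrak{k}$ inside $U(\mathfrak{g}) \otimes C(\mathfrak{p})$, together with, for each $z \in Z(\mathfrak{g})$, an element $a_z \in (U(\mathfrak{g}) \otimes C(\mathfrak{p}))^K$ such that
$$
z \otimes 1 \;=\; \zeta(z) \;+\; D a_z + a_z D.
$$
Moreover, under the Harish-Chandra isomorphisms $Z(\mathfrak{g}) \cong S(\mathfrak{t})^{W_\mathfrak{g}}$ and $Z(\mathfrak{k}_\Delta) \cong S(\mathfrak{t})^{W_\mathfrak{k}}$ (with their standard $\rho$- and $\rho_c$-shifts built in), $\zeta$ becomes the tautological inclusion of invariant polynomials. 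The motivating prototype is Parthasarathy's formula
$$
D^{2} \;=\; -\Omega_{\mathfrak g}\otimes 1 \;+\; \Omega_{\mathfrak{k}_\Delta} \;+\; \|\rho_c\|^{2} - \|\rho\|^{2},
$$
which realises the identity for the quadratic Casimir.

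Granting the identity above, the theorem follows formally. Let $\tilde v \in \ker D \subset X \otimes \mathrm{Spin}_G$ represent a nonzero vector $v$ of the $\widetilde{K}$-type $E_\mu \subset H_D(X)$, and let $\chi_X$ denote the infinitesimal character of the irreducible module $X$, so $(z \otimes 1)\tilde v = \chi_X(z)\,\tilde v$. Applying the key identity and using $D\tilde v = 0$ gives $(z\otimes 1)\tilde v = \zeta(z)\tilde v + D(a_z\tilde v)$, and the last term dies in $H_D(X)$. Hence $\zeta(z)$ acts on $v$ by the scalar $\chi_X(z)$. But $\zeta(z) \in Z(\mathfrak{k}_\Delta)$ acts on the $\widetilde{K}$-type $E_\mu$ by the $\mathfrak{k}$-infinitesimal character attached to the highest weight $\mu$, which, after the $\rho_c$-shift built into the Harish-Chandra isomorphism for $\mathfrak{k}$, is precisely evaluation at $\mu+\rho_c$. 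Equating these scalars for every $z \in Z(\mathfrak{g})$ and using that $\zeta$ corresponds to inclusion of invariants forces $\chi_X$ to coincide with the character associated to $\mu+\rho_c$.

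The main obstacle is establishing Vogan's conjecture itself. The strategy of Huang-Pand\v zi\'c is to introduce the super-bracket $d=[D,\cdot]$ on the $K$-invariant subalgebra $A=(U(\mathfrak{g}) \otimes C(\mathfrak{p}))^K$; because $D^{2}$ lies in $Z(\mathfrak{k}_\Delta) \subset A$, $d^{2}$ is inner on $A$ and a well-defined cohomology $H(A,d)$ exists. One then proves $H(A,d) = Z(\mathfrak{k}_\Delta)$ via a filtration argument combined with a Koszul-type computation on the associated graded. Each element $z \otimes 1$ with $z \in Z(\mathfrak{g})$ is $d$-closed, since $z$ commutes with all of $U(\mathfrak{g})$, and so defines a cohomology class; this produces the map $\zeta$. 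To identify $\zeta$ with the shifted inclusion of invariants, one combines Parthasarathy's formula for the quadratic Casimir with a leading-symbol comparison on higher-degree generators, together with the fact that any ambiguity in $\zeta$ must be absorbed by a $W_\mathfrak{k}$-invariant polynomial. This cohomological calculation is the technical heart of \cite{HP2002}; once it is in place, the deduction above completes the proof.
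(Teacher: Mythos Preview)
Your sketch is a faithful outline of the Huang--Pand\v zi\'c argument from \cite{HP2002}: the key identity $z\otimes 1 = \zeta(z) + Da_z + a_zD$, the deduction that $\zeta(z)$ acts on $E_\mu$ by $\chi_X(z)$, and the identification of $\zeta$ via the differential $d=[D,\cdot]$ and a filtration/Koszul computation on $(U(\mathfrak g)\otimes C(\mathfrak p))^K$. There is nothing to compare against in the present paper, however: Theorem~\ref{thm.HP.condition} is quoted without proof as \cite[Theorem~2.3]{HP2002}, and the paper only \emph{uses} it (e.g.\ in Lemma~\ref{lemma.H.P.condition}) rather than reproving it. So your proposal is correct, and in fact goes well beyond what the paper itself supplies.
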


An important way to construct an irreducible unitary $(\mathfrak g,K)$-module is using cohomological parabolic induction. There is a formula to compute the Dirac cohomology for cohomologically induced modules proved by Dong and Huang.

\begin{theorem}\cite[Theorem 5.7]{DongHuang15}\label{thm.HD.induction}
    Let $G$ be a real reductive Lie group in the Harish-Chandra class. Let $\mathfrak q = \mathfrak l\oplus\mathfrak u$ be a $\theta$-stable parabolic subalgebra with the Levi subgroup $L = N_G(\mathfrak q)$. Let $\mathcal L_S(\cdot)$ denote the cohomological induction functor associated with $\mathfrak q$. Suppose the irreducible unitary $(\mathfrak l,L\cap K)$-module $Z$ is weakly good. Then there is a $\widetilde K$-module isomorphism
    \begin{equation*}
        H_D(\mathcal L_S(Z))\cong \mathcal L^{\widetilde K}_S(H_D(Z)\otimes \mathbb{C}_{-\rho(\mathfrak u\cap\mathfrak p)})
    \end{equation*}
\end{theorem}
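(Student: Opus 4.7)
The plan is to reduce the computation of $H_D(\mathcal{L}_S(Z))$ to that of $H_D(Z)$ by exploiting the orthogonal decomposition
\[
\mathfrak{p} = (\mathfrak{l} \cap \mathfrak{p}) \oplus (\mathfrak{u} \cap \mathfrak{p}) \oplus (\bar{\mathfrak{u}} \cap \mathfrak{p})
\]
coming from the $\theta$-stable parabolic, and then matching $\widetilde{K}$-types via Theorem \ref{thm.HP.condition}. The main tool is Kostant's cubic Dirac operator for the pair $(\mathfrak{g}, \mathfrak{l})$, which relates $D_G$ to $D_L$ modulo a boundary operator living on $\wedge^\bullet(\mathfrak{u} \cap \mathfrak{p})$.

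First I would record the spin-module factorisation. Under the decomposition above, $C(\mathfrak{p})$ splits as a tensor product, and
\[
\mathrm{Spin}_G \cong \mathrm{Spin}_L \otimes \wedge^\bullet(\mathfrak{u} \cap \mathfrak{p}) \otimes \mathbb{C}_{-\rho(\mathfrak{u} \cap \mathfrak{p})}
\]
as $\widetilde{L \cap K}$-modules; the character twist arises from realising $C\bigl((\mathfrak{u}\cap\mathfrak{p}) \oplus (\bar{\mathfrak{u}}\cap\mathfrak{p})\bigr)$ as $\mathrm{End}\bigl(\wedge^\bullet(\mathfrak{u} \cap \mathfrak{p})\bigr)$ together with the half-spin normalisation. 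This is the source of the $\mathbb{C}_{-\rho(\mathfrak{u} \cap \mathfrak{p})}$ appearing in the statement.

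Second, I would expand $D_G$ in a compatible orthonormal basis as $D_G = D_L + D_+ + D_-$, where $D_L$ acts only on the $\mathrm{Spin}_L$ factor and $D_\pm$ are mutually adjoint Koszul-type operators on $\wedge^\bullet(\mathfrak{u} \cap \mathfrak{p})$. The weakly good hypothesis now lets me invoke Vogan's vanishing ($\mathcal{L}_i(Z) = 0$ for $i \neq S$) and the bottom-layer $\widetilde{K}$-type formula: the full Dirac kernel on $\mathcal{L}_S(Z) \otimes \mathrm{Spin}_G$ reduces to $\mathcal{L}_S^{\widetilde K}$ applied to $\ker D_L$ on $Z \otimes \mathrm{Spin}_L$, after absorbing the $\wedge^\bullet(\mathfrak{u} \cap \mathfrak{p})$ factor into the derived functor and keeping the $\mathbb{C}_{-\rho(\mathfrak{u} \cap \mathfrak{p})}$ shift. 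To confirm the isomorphism is canonical I would then check infinitesimal characters: any $\widetilde K$-type $E_\mu$ in the left-hand side forces the infinitesimal character to be $\mu + \rho_c$ by Theorem \ref{thm.HP.condition}, while the standard cohomological-induction formula gives $\lambda_Z + \rho(\mathfrak{u})$ on the right, and the identity $\rho_c = \rho_c(\mathfrak{l}) + \rho(\mathfrak{u}) - \rho(\mathfrak{u} \cap \mathfrak{p})$ aligns the two.

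The main obstacle lies in the second step: one has to verify that $D_\pm$ act as the differentials of a Koszul complex computing $(\mathfrak{u} \cap \mathfrak{p})$-homology and matching exactly the $\mathcal{L}_S^{\widetilde K}$ functor on the Dirac-cohomology side. This is where the weakly good hypothesis is indispensable, since outside this range higher derived functors would contribute and the kernel of $D_G$ would fail to collapse onto a single cohomological degree, forcing one to manage a spectral sequence rather than obtain a clean isomorphism.
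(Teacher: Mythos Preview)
The paper does not prove this theorem at all; it is quoted verbatim as \cite[Theorem 5.7]{DongHuang15} and immediately followed by the remark ``We will not use the explicit formula but its direct corollary'' (Corollary~\ref{cor.HD.induction}). There is therefore no proof in the paper to compare your proposal against.

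That said, your sketch is broadly in the spirit of the Dong--Huang argument: the spin-module factorisation $\mathrm{Spin}_G \cong \mathrm{Spin}_L \otimes \wedge^\bullet(\mathfrak{u}\cap\mathfrak{p}) \otimes \mathbb{C}_{-\rho(\mathfrak{u}\cap\mathfrak{p})}$ is the right starting point, and the reduction to $H_D(Z)$ does pass through an identification of Dirac cohomology with $\mathfrak{u}$-cohomology (Kostant's theorem for the cubic Dirac operator on the pair $(\mathfrak{g},\mathfrak{l})$). However, your second step is too compressed to be a proof: the assertion that ``the full Dirac kernel on $\mathcal{L}_S(Z)\otimes\mathrm{Spin}_G$ reduces to $\mathcal{L}_S^{\widetilde K}$ applied to $\ker D_L$'' hides the entire content of the theorem. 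One must show that the Dirac cohomology functor commutes with the Zuckerman functor in the weakly good range, and this requires tracking how $D$ interacts with the derived-functor construction, not merely invoking vanishing of $\mathcal{L}_i(Z)$ for $i\neq S$. The infinitesimal-character check you give at the end confirms consistency but does not establish the isomorphism. If you wish to supply a genuine proof here you should consult \cite{DongHuang15} directly; for the purposes of this paper the citation suffices.
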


We will not use the explicit formula but its direct corollary.

\begin{corollary}\label{cor.HD.induction}
    Under the same condition of theorem \ref{thm.HD.induction}, let $X$ be cohomologically induced from $Z$ in the weakly good range. Then $H_D(X)\neq 0$ if and only if $H_D(Z)\neq 0$.
\end{corollary}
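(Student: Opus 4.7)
By Theorem~\ref{thm.HD.induction}, there is a $\widetilde K$-module isomorphism
\[
H_D(\mathcal L_S(Z)) \cong \mathcal L_S^{\widetilde K}\bigl(H_D(Z) \otimes \mathbb C_{-\rho(\mathfrak u \cap \mathfrak p)}\bigr).
\]
The ``only if'' direction is immediate: tensoring with a one-dimensional module preserves the zero module, and the additive functor $\mathcal L_S^{\widetilde K}$ sends $0$ to $0$; so $H_D(Z) = 0$ forces $H_D(\mathcal L_S(Z)) = 0$.

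For the ``if'' direction, assume $H_D(Z) \neq 0$. By Theorem~\ref{thm.HP.condition}, $H_D(Z)$ decomposes as a finite direct sum of finite-dimensional irreducible $\widetilde{L \cap K}$-modules $E_{\mu_i}$, whose highest weights $\mu_i$ satisfy $\mu_i + \rho_c^L = \Lambda_Z$ up to the Weyl group of $\mathfrak l \cap \mathfrak k$, where $\rho_c^L$ is the half-sum of positive roots of $\mathfrak l \cap \mathfrak k$ and $\Lambda_Z$ is the infinitesimal character of $Z$. The twist shifts each highest weight by $-\rho(\mathfrak u \cap \mathfrak p)$, and by additivity of $\mathcal L_S^{\widetilde K}$ it suffices to show that $\mathcal L_S^{\widetilde K}(E_{\mu_i} \otimes \mathbb C_{-\rho(\mathfrak u \cap \mathfrak p)}) \neq 0$ for at least one $i$. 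On finite-dimensional $\widetilde{L \cap K}$-input, $\mathcal L_S^{\widetilde K}$ realizes the Dolbeault cohomology of a homogeneous bundle on the compact flag variety $K/(L \cap K)$---a Bott-Borel-Weil computation whose non-vanishing at degree $S = \dim(\mathfrak u \cap \mathfrak k)$ is governed by a $\rho_c$-dominance condition on the input highest weight.

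The main obstacle is converting the weakly good range hypothesis $\mathrm{Re}\langle \Lambda_Z + \rho(\mathfrak u), \alpha \rangle \geq 0$ for $\alpha \in \Delta(\mathfrak u, \mathfrak t)$ into the Bott-Borel-Weil positivity condition on the $\widetilde K$-side. Writing $\rho(\mathfrak u) = \rho(\mathfrak u \cap \mathfrak k) + \rho(\mathfrak u \cap \mathfrak p)$ and using $\mu_i + \rho_c^L = \Lambda_Z$, the condition at the shifted weight $\mu_i - \rho(\mathfrak u \cap \mathfrak p) + \rho_c$ reduces, after careful $\rho$-shift bookkeeping, to the weakly good inequality restricted to $\Delta(\mathfrak u \cap \mathfrak k)$, which holds by assumption. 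A more conceptual alternative is to invoke the general Knapp-Vogan principle that cohomological induction in the weakly good range preserves non-vanishing of admissible modules; applied at the $\widetilde K$-level to $H_D(Z) \otimes \mathbb C_{-\rho(\mathfrak u \cap \mathfrak p)}$, this gives the desired conclusion at once, transferring the weakly good hypothesis from $(Z, \mathfrak q)$ to its $\widetilde K$-counterpart through the built-in $-\rho(\mathfrak u \cap \mathfrak p)$ twist and sidestepping the explicit shift computation, which is the most delicate part of the argument.
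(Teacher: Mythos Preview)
The paper itself gives no proof here; it records the statement as a ``direct corollary'' of Theorem~\ref{thm.HD.induction}, with the nontrivial direction implicitly deferred to \cite{DongHuang15}, where the non-vanishing of $\mathcal L_S^{\widetilde K}$ on the relevant twisted $\widetilde{L\cap K}$-types is handled as part of establishing the isomorphism.

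Your ``only if'' direction is fine. For the ``if'' direction your strategy---reduce to a Bott--Borel--Weil non-vanishing for $\mathcal L_S^{\widetilde K}$ and derive the needed dominance from the weakly good hypothesis---is the right one, but the execution has a gap. First, Theorem~\ref{thm.HP.condition} applied to $Z$ places $\mu_i + \rho_c^L$ in the $W(\mathfrak l)$-orbit of $\Lambda_Z$, not the $W(\mathfrak l\cap\mathfrak k)$-orbit; you must allow $\mu_i + \rho_c^L = w\Lambda_Z$ for an arbitrary $w\in W(\mathfrak l)$, and this $w$ interacts with the dominance check against $\Delta(\mathfrak u\cap\mathfrak k)$. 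Second, the promised ``careful $\rho$-shift bookkeeping'' is asserted but never carried out, and it does not reduce as cleanly as you claim: expanding $\langle \mu_i - \rho(\mathfrak u\cap\mathfrak p) + \rho_c, \alpha\rangle$ for $\alpha\in\Delta(\mathfrak u\cap\mathfrak k)$ and invoking the weakly good inequality $\langle w\Lambda_Z + \rho(\mathfrak u), \alpha\rangle \geq 0$ leaves a residual term of the shape $\langle -2\rho(\mathfrak u\cap\mathfrak p), \alpha\rangle$ (the exact form depends on normalization), which is not $\geq 0$ in general. Your ``conceptual alternative'' does not bypass this: transferring the weakly good hypothesis from $(Z,\mathfrak q)$ to its $\widetilde K$-counterpart \emph{is} precisely that $\rho$-shift computation, so the same step is missing there too. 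To close the gap one needs more than the raw weakly good inequality---for instance, that $\mu_i + \rho_c^L$, arising from Dirac cohomology, is already $\Delta^+(\mathfrak l\cap\mathfrak k)$-regular dominant, together with a finer analysis of how the noncompact roots of $\mathfrak u$ pair with $\Delta(\mathfrak u\cap\mathfrak k)$; this is supplied in \cite{DongHuang15}.
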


If one requires the group $G$ has finite center, then Dong gives a clearer description of the Dirac series of $G$, which is the finiteness theorem.

\begin{theorem}\cite[Theorem A]{Dong20}\label{thm.HD.finiteness}
    Let $G$ be a real group of finite center. All but finitely many members of the Dirac series is cohomologically induced from a member of the Dirac series of $L$ that is in the good range. Here $L$ is a proper $\theta$-stable Levi subgroup of $G$.
\end{theorem}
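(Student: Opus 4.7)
The plan is to split the Dirac series of $G$ into two parts: a ``cohomologically induced part" which is handled by Corollary \ref{cor.HD.induction}, and an ``exceptional part" which must be shown to be finite. Concretely, Corollary \ref{cor.HD.induction} already tells us that whenever $X=\mathcal L_S(Z)$ with $\mathfrak q=\mathfrak l\oplus\mathfrak u$ a proper $\theta$-stable parabolic and $Z$ in the weakly good range, one has $H_D(X)\neq 0$ iff $H_D(Z)\neq 0$. Thus every $X$ of this form contributes a member of the Dirac series of $L$, and the only $X$ that can fail the conclusion are those that admit no realization $X=\mathcal L_S(Z)$ with $\mathfrak q$ proper and $Z$ in the (strictly) good range. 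One therefore needs to (i) upgrade ``weakly good" to ``good", losing only finitely many $X$ whose parameter sits on one of the finitely many good-range walls for some $\mathfrak q$, and (ii) bound the set of $X$ that have no such realization at all.

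For step (ii) the key input is Theorem \ref{thm.HP.condition}. If $H_D(X)\neq 0$, the infinitesimal character of $X$ equals $\mu+\rho_c$ for some highest weight $\mu$ of a $\widetilde K$-type $E_\mu\subset X\otimes \mathrm{Spin}_G$. Since the weights of $\mathrm{Spin}_G$ lie in the convex hull of $W\cdot \rho(\mathfrak p)$, such a $\mu$ is of the form $\mathrm{(weight\ of\ }X) + \mathrm{(weight\ of\ Spin}_G)$, and in particular is controlled by the highest weight $\lambda_\delta$ of any lowest $K$-type $\delta$ of $X$. Together with the Dirac inequality (equality under unitarity), this yields an estimate of the shape
$$
\|\Lambda(X)\|\;\leqslant\;\|\lambda_\delta\|+\|\rho_c\|+\|\rho(\mathfrak p)\|.
$$
Hence, once the lowest $K$-type is bounded, the infinitesimal character is bounded, and only finitely many integral (up to a $\rho$-shift) values of $\Lambda$ are available.

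The main obstacle is therefore to bound $\lambda_\delta$ on the exceptional set. For this I would use the Vogan--Zuckerman bottom-layer construction: to any lowest $K$-type $\delta$ of $X$ one attaches a $\theta$-stable parabolic $\mathfrak q(\delta)=\mathfrak l(\delta)\oplus\mathfrak u(\delta)$ and a realization $X=\mathcal L_S(Z_\delta)$. A direct computation with $\lambda_\delta-2\rho(\mathfrak u(\delta)\cap\mathfrak p)$ shows that as soon as $\lambda_\delta$ lies sufficiently deep inside the $K$-dominant chamber relative to $\rho(\mathfrak u(\delta))$, the parabolic $\mathfrak q(\delta)$ is proper and the inducing module $Z_\delta$ lies in the strictly good range. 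Contrapositively, on the exceptional set $\lambda_\delta$ must stay in a bounded region determined by $\rho$ and $\rho(\mathfrak p)$, and combined with the previous paragraph this confines $(\lambda_\delta,\Lambda(X))$ to a compact (hence, after integrality, finite) set of parameters. Since $X$ is determined up to isomorphism by its lowest $K$-type together with its infinitesimal character (via the Langlands data), only finitely many such $X$ exist, and the theorem follows. The step I expect to be most delicate is precisely the quantitative comparison between $\lambda_\delta$ and $\rho(\mathfrak u(\delta))$ needed to guarantee that $\mathfrak q(\delta)$ is proper and good; this is where the assumption of finite center of $G$ enters, ensuring the relevant weight lattice is discrete so that boundedness implies finiteness.
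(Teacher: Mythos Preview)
The paper does not prove Theorem \ref{thm.HD.finiteness}; it is quoted verbatim from \cite[Theorem A]{Dong20} and used as a black box to motivate the dichotomy between FS-scattered and string representations. There is therefore no ``paper's own proof'' to compare your proposal against.

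That said, a brief remark on your sketch. The overall architecture---bound the infinitesimal character via Theorem~\ref{thm.HP.condition}, then show that outside a bounded region every unitary $X$ is cohomologically induced in the good range---is indeed the strategy of \cite{Dong20}. Two points in your write-up would not survive as stated. First, the assertion that ``$X$ is determined up to isomorphism by its lowest $K$-type together with its infinitesimal character'' is false in general; what one actually uses is the much weaker (and standard) fact that for a fixed infinitesimal character there are only finitely many irreducible Harish-Chandra modules. Second, your inequality $\|\Lambda(X)\|\leqslant\|\lambda_\delta\|+\|\rho_c\|+\|\rho(\mathfrak p)\|$ is not what the Dirac inequality gives: nonvanishing of $H_D(X)$ forces $\|\Lambda(X)\|=\|\mu+\rho_c\|$ for some $\widetilde K$-type $\mu$ actually occurring in $X\otimes\mathrm{Spin}_G$, and relating this $\mu$ to a \emph{lowest} $K$-type of $X$ requires an extra argument (in \cite{Dong20} this is handled through the spin-norm and u-small $K$-types rather than a crude triangle inequality). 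Your identification of the ``delicate step''---the quantitative comparison guaranteeing that $\mathfrak q(\delta)$ is proper and the induction is in the good range---is accurate; that is exactly where the work in \cite{Dong20} lies.
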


The author calls the Dirac series obtained cohomological induction from good range the \emph{string representations}, and the ones that cannot be obtained in this way are called the \emph{scattered representations}. Later in his another paper \cite{DDY20}, the author realized that the scattered representations which are not fully supported should be viewed as the starting points of strings of singling them out. Then what remains are the fully supported scattered part, which are called FS-scattered representations.

Because the FS-scattered representations cannot be obtained by cohomological induction from the weakly good range, they are all fully supported. So, people may also write them as FS-scattered representations. Moreover, the FS-scattered representations are the building blocks of the string representations.

\section{The Dirac series of $U(n,2)$}

\subsection{Conditions on the infinitesimal characters}

Let $X$ be an FS-scattered representation of $U(n,2)$ in the Dirac series. Then it is a fully supported representation with nonzero Dirac cohomology. In this subsection, we show some necessary conditions on the infinitesimal character $\Lambda(X)$ of $X$.

Theorem \ref{thm.HP.condition} implies a condition of the infinitesimal character of the irreducible modules having non-zero Dirac cohomology, which is called as H.P.-condition. The following Lemma can be found in \cite[Lemma 5.2]{DongWong21}, which is the H.P-condition in the $U(p,q)$ case. 

\begin{lemma}\label{lemma.H.P.condition}(H.P.-condition)
    Let $X$ be a representation of $U(n,2)$ with nonzero Dirac cohomology $H_D(X)$. Then the infinitesimal character $\Lambda(X)$ of $X$ is in $\rho+\mathbb Z^{n+2}$ (integral). Moreover, if we write $\Lambda(X)$ as
    \begin{equation}\label{Lambda.usual.coor}
        \Lambda(X)=(x_1,x_2,\cdots,x_{n+2}),\quad x_i\geqslant x_{i+1},
    \end{equation}
    then $x_i=x_{i+1}=x_{i+2}$ cannot happen; and the number of indices $i$ such that $x_i=x_{i+1}$ should be no more than 2.
\begin{proof}
    Let $\widetilde{\pi}$ be a $\widetilde K$-type in the Dirac cohomology $H_D(X)$. By \cite[Lemma 3.5]{Dong13}, there exists is a highest weight $\mu$ of some $K$-type of $X$, such that the highest weight of $\widetilde \pi$ can be written as  $\{\mu+\rho_c-w\rho\}$, where $\{v\}$ means the unique $\mathfrak k$-dominant weight conjugate to $v$ under the Weyl group $W(\mathfrak{k},\mathfrak{t})$   and $w$ is some element in the Weyl group $W(\mathfrak{g},\mathfrak{t})$. Then by Theorem \ref{thm.HP.condition}, we have
    \begin{equation}\label{Lambda.PRV.component}
        \{\mu+\rho_c-w\rho\}+\rho_c=w\Lambda(X) 
    \end{equation}
for some $w\in W(\mathfrak{g},\mathfrak{t})$. 
    
    The weight $\mu$ is in $\mathbb Z^{n+2}$. Thus $\{ \mu+\rho_c-w\rho \}\in \mathbb Z^{n+2}+\rho_c+\rho$, and hence $\Lambda(X)\in \mathbb Z^{n+2}+\rho$. The equation (\ref{Lambda.PRV.component}) also shows that $\Lambda$ is conjugate to a strongly $\mathfrak k$-dominant weight. Recall a weight $(a_1,a_2,\cdots,a_n|a_{n+1},a_{n+2})$ is strongly $\mathfrak k$-dominant if and only if
    $$a_1>a_2>\cdots> a_n;~a_{n+1}>a_{n+2}.$$
    The action of Weyl group on $\Lambda$ permutes the entries $x_i$ of $\Lambda(X)$. The condition above shows that $\{x_i\}$ should be divided into two parts with size 2 and $n$, and in each part, all entries are pairwise different. Therefore, no entry can appear more than twice. Meanwhile, if $x_i=x_{i+1}$, then they must be assigned to different parts. Notice one part has size 2. Therefore, $x_i=x_{i+1}$ cannot happen more than twice.
\end{proof}
\end{lemma}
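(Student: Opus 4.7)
The plan is to invoke Theorem~\ref{thm.HP.condition} together with a PRV-type decomposition of the $\widetilde K$-types appearing in $H_D(X)$, and then to translate the resulting identity into the two stated constraints. Concretely, if $\widetilde \pi\subset H_D(X)$ is a $\widetilde K$-type of highest weight $\widetilde \mu$, Theorem~\ref{thm.HP.condition} yields $w\Lambda(X)=\widetilde \mu+\rho_c$ for some $w\in W(\mathfrak g,\mathfrak t)$. Because the $\widetilde K$-types in $H_D(X)$ all come from $K$-types of $X$ tensored with $\mathrm{Spin}_G$, a standard PRV-type identification (cf.\ \cite[Lemma~3.5]{Dong13}) lets me write $\widetilde \mu=\{\mu+\rho_c-w'\rho\}$, where $\mu$ is the highest weight of some $K$-type of $X$, $w'\in W(\mathfrak g,\mathfrak t)$, and $\{\,\cdot\,\}$ denotes the unique $W(\mathfrak k,\mathfrak t)$-dominant representative. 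This single identity carries all the information I need.

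For the integrality assertion, I would observe that any $K=U(n)\times U(2)$-weight $\mu$ lies in $\mathbb Z^{n+2}$, that $2\rho_c\in\mathbb Z^{n+2}$, and that $w'\rho-\rho$ lies in the root lattice $Q(\mathfrak g,\mathfrak t)\subset\mathbb Z^{n+2}$. Applying the taking-dominant map $\{\,\cdot\,\}$ is merely a $W(\mathfrak k,\mathfrak t)$-permutation of coordinates and so does not affect cosets modulo $\mathbb Z^{n+2}$. Substituting into the identity above gives $w\Lambda(X)\in 2\rho_c-\rho+\mathbb Z^{n+2}=\rho+\mathbb Z^{n+2}$ (using $2\rho\in\mathbb Z^{n+2}$), and since $W(\mathfrak g,\mathfrak t)=S_{n+2}$ preserves the coset $\rho+\mathbb Z^{n+2}$ (it permutes coordinates), we conclude $\Lambda(X)\in\rho+\mathbb Z^{n+2}$.

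For the multiplicity statement, the crucial observation is that $\widetilde \mu+\rho_c$ is \emph{strictly} $\mathfrak k$-dominant: writing $\widetilde \mu=(\widetilde \mu_1\geq\cdots\geq\widetilde \mu_n\mid \widetilde \mu_{n+1}\geq\widetilde \mu_{n+2})$, the shift by $\rho_c$ upgrades each weak inequality to a strict one. Because $\Lambda(X)$ and $\widetilde \mu+\rho_c$ share the same multiset of coordinates (they are related by the $S_{n+2}$-action), the entries of $\Lambda(X)$ can be split into two groups of sizes $n$ and $2$, each with pairwise distinct entries. A value occurring as $x_i=x_{i+1}=x_{i+2}$ would force at least two equal entries inside one of those two groups, contradiction; similarly, if $k$ distinct values each occur exactly twice, each such pair must contribute one copy to the size-$2$ group, forcing $k\leq 2$. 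Summing the repeated multiplicities then yields $\#\{i:x_i=x_{i+1}\}\leq 2$. The only mildly delicate point in the whole argument is the parity bookkeeping in the integrality step, namely checking that $\rho$, $2\rho_c$, and $Q(\mathfrak g,\mathfrak t)$ have compatible residues modulo $\mathbb Z^{n+2}$; the combinatorial step reduces cleanly to pigeonholing into a group of size two.
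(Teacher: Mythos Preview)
Your proposal is correct and follows essentially the same route as the paper: both invoke \cite[Lemma~3.5]{Dong13} together with Theorem~\ref{thm.HP.condition} to obtain the identity $\{\mu+\rho_c-w'\rho\}+\rho_c=w\Lambda(X)$, derive integrality from the coset bookkeeping $\mu\in\mathbb Z^{n+2}$, $w'\rho\in\rho+\mathbb Z^{n+2}$, $2\rho_c\in\mathbb Z^{n+2}$, and then read off the multiplicity bound from the fact that $\Lambda(X)$ is $W(\mathfrak g,\mathfrak t)$-conjugate to a strictly $\mathfrak k$-dominant weight, forcing a partition of the coordinates into groups of sizes $n$ and $2$ with pairwise distinct entries. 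Your write-up is slightly more explicit about why the $W(\mathfrak k,\mathfrak t)$-permutation in $\{\,\cdot\,\}$ preserves the relevant $\mathbb Z^{n+2}$-coset, but there is no substantive difference in strategy.
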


As an FS-scattered representation, $X$ is not cohomologically induced from any irreducible module in the weakly good range from any proper $\theta$-stable parabolic subgroups of $G$. In the case of $U(p,q)$, another conjecture of Vogan is recently proved by Wong \cite{Wong24}.

\begin{theorem}\label{Thm.FPP}
    Let $X$ be a unitary irreducible $(\mathfrak g,K)$-module with real infinitesimal character $\Lambda$. If $X$ is not cohomologically induced in the good range from any $(\mathfrak l,L\cap K)$-module $X_L$ for any proper $\theta$-stable parabolic subalgebra $\mathfrak q=\mathfrak l\oplus\mathfrak u$, then one must have $\langle \Lambda,\alpha^\vee \rangle\leqslant 1$ for all simple roots $\alpha\in\Delta^+(\mathfrak g,\mathfrak h)$. In view of (\ref{Lambda.usual.coor}), it means $x_i-x_{i+1}\leqslant 1$.
\end{theorem}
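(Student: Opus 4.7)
The plan is to establish the contrapositive: if $\langle \Lambda,\alpha^\vee\rangle>1$ for some simple root $\alpha$, equivalently $x_i-x_{i+1}>1$ for some index $i$ in the ordering of (\ref{Lambda.usual.coor}), then $X$ is cohomologically induced in the good range from a proper $\theta$-stable parabolic subalgebra. Since the combinatorial $\theta$-stable datum $\mathcal D$ of $X$ (together with its $\nu$-coordinates) encodes $\Lambda$ as the multiset of sums $\gamma_j+\nu_{j,\ell}$, the hypothesis $x_i-x_{i+1}>1$ produces a ``gap'' of width greater than $1$ inside this multiset.

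First I would use that gap to exhibit a partition $\mathcal D=\mathcal D_1\coprod \mathcal D_2$ of the $\gamma$-blocks, where $\mathcal D_1$ collects those blocks whose contributions to $\Lambda$ are all $\geqslant x_i$ and $\mathcal D_2$ collects those contributing entries $\leqslant x_{i+1}$. The key technical point is to verify that no single $\gamma_j$-block can straddle such a gap; this uses the constraints on unitary fundamental data recorded in Example \ref{eg.u(n2).fundamental.cases}, which bound the spread $\max|\nu_{j,\ell}|$ within a block in terms of its neighbors in $\mathcal D$ and of $\Theta,\Phi$. Once straddling is excluded, the partition is well-defined and respects the block structure, so it determines a proper $\theta$-stable parabolic $\mathfrak q'=\mathfrak l'\oplus \mathfrak u'$ containing the quasisplit $\mathfrak q_0$ of $\lambda_a(\delta)$.

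Second, I would apply Lemma \ref{lemma.fully.supported.condition} to this partition. The segment $[e_1,b_1]$ from $\mathcal D_1$ satisfies $b_1\geqslant x_i$, while $[e_2,b_2]$ from $\mathcal D_2$ satisfies $e_2\leqslant x_{i+1}$, so the segment inequality $b_1\geqslant e_2$ holds; hence $X$ is cohomologically induced from $\mathfrak q'$ in the weakly good range. To promote ``weakly good'' to ``good'', I would observe that $x_i-x_{i+1}>1$ in fact gives the strict comparison $b_1>e_2$, and then translate this numerical gap into strict positivity $\langle \lambda+\rho(\mathfrak u'),\alpha\rangle>0$ for every $\alpha\in\Delta(\mathfrak u',\mathfrak h)$, which is exactly the good range condition. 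Combining with Corollary \ref{cor.HD.induction} shows the cohomological induction is genuine and non-trivial, contradicting the hypothesis of the theorem.

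The hard part will be the straddling exclusion in step one: a priori the $\nu$-coordinates of a single block can produce several entries of $\Lambda$, and one must rule out that two of them lie on opposite sides of the gap of size $>1$. For the present setting of $U(n,2)$, blocks have bounded size (at most $(3,2)$, with only the distinguished $\Theta$- and $\Phi$-blocks carrying nontrivial $\nu$), so the analysis reduces to a finite case check against the unitarity conditions of Example \ref{eg.u(n2).fundamental.cases}; this is essentially where the work of \cite{Wong24} is needed, and is the step that makes the proof non-formal.
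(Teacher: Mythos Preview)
The paper does not supply its own proof of this statement: it is quoted as the main result of \cite{Wong24} (Vogan's fundamental parallelepiped conjecture for $U(p,q)$), and is used here as a black box. So there is no argument in the present paper against which to compare your proposal.

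That said, your outline has two real problems. First, the partition you propose must be a partition of \emph{consecutive $\gamma$-blocks}, not of entries of $\Lambda$; and the segment test in Lemma~\ref{lemma.fully.supported.condition} compares the \emph{extreme} entries $b_1=\min(\Lambda|_{\mathcal D_1})$ and $e_2=\max(\Lambda|_{\mathcal D_2})$, not the particular $x_i,x_{i+1}$ where the gap sits. Even if no single block straddles the gap at $x_i$, a block placed in $\mathcal D_1$ may contribute an entry $\Theta-\nu_\Theta$ far below $x_{i+1}$, so that $b_1<e_2$ and the good-range inequality fails. Thus ``cut at the gap'' does not by itself produce a valid $\theta$-stable reduction; one needs a much finer analysis of how the $\nu$-arrows interact with the block ordering, which is precisely the content of \cite{Wong24}. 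Second, there is a circularity: the unitarity constraints in Example~\ref{eg.u(n2).fundamental.cases} that you invoke for the straddling exclusion come from \cite{WongZhang24}, whose classification already relies on the FPP theorem from \cite{Wong24}. So appealing to those constraints cannot yield an independent proof. The correct disposition here is simply to cite \cite{Wong24}, as the paper does.
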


\begin{lemma}\label{Lambda.difference.one}
    Let $X$ be an FS-scattered representation. The infinitesimal character $\Lambda(X)$, as (\ref{Lambda.usual.coor}), should satisfy $x_i-x_{i+1}=0$ or $1$.
    \begin{proof}
By Theorem \ref{Thm.FPP}, $x_i-x_{i+1}\leqslant 1$. By Lemma \ref{lemma.H.P.condition}, $\Lambda(X)$ is integral. Notice that $x_i\geqslant x_{i+1}$. Hence the difference $x_i-x_{i+1}$ is either 0 or 1.
    \end{proof}
\end{lemma}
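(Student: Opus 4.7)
The plan is to combine two results already at our disposal: the H.P.-condition (Lemma \ref{lemma.H.P.condition}) and Vogan's fundamental parallelepiped statement (Theorem \ref{Thm.FPP}). Since $X$ is FS-scattered, it is in particular fully supported, so by the discussion following Lemma \ref{lemma.fully.supported.condition}, $X$ cannot be realized as cohomological induction from a proper $\theta$-stable parabolic subalgebra in the weakly good range. As the good range is a subcase of the weakly good range, this also rules out cohomological induction in the good range. The hypothesis of Theorem \ref{Thm.FPP} is therefore satisfied, and I would conclude $\langle \Lambda(X), \alpha^\vee \rangle \leqslant 1$ for every simple root $\alpha$. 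In the standard $A_{n+1}$ coordinates the simple coroots are $e_i - e_{i+1}$, so this inequality reads $x_i - x_{i+1} \leqslant 1$.

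Next, I would use Lemma \ref{lemma.H.P.condition} to secure integrality: it guarantees $\Lambda(X) \in \rho + \mathbb{Z}^{n+2}$. Because consecutive entries of $\rho$ for $U(n,2)$ differ by exactly $1$, the consecutive differences $x_i - x_{i+1}$ of the dominant coordinate expression of $\Lambda(X)$ are integers. Combined with the ordering $x_i \geqslant x_{i+1}$ built into \eqref{Lambda.usual.coor}, the difference $x_i - x_{i+1}$ is a non-negative integer, and by the previous paragraph it is at most $1$, so it must equal $0$ or $1$.

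The argument is genuinely short because Theorem \ref{Thm.FPP} does the heavy lifting. The only subtlety I anticipate is bookkeeping: one must verify that being not cohomologically induced in the weakly good range (the definition of fully supported used here) still implies the apparently stronger-sounding hypothesis of Theorem \ref{Thm.FPP} about the good range, and one must be consistent about the coordinate system so that the simple-root pairing literally reproduces the consecutive difference $x_i - x_{i+1}$. Both are standard, so I do not expect any real obstacle beyond stating the implications in the correct order.
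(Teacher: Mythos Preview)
Your proposal is correct and follows essentially the same argument as the paper: apply Theorem~\ref{Thm.FPP} for the upper bound $x_i-x_{i+1}\leqslant 1$, invoke Lemma~\ref{lemma.H.P.condition} for integrality, and combine with the ordering $x_i\geqslant x_{i+1}$. The additional bookkeeping you mention (checking the good-range hypothesis of Theorem~\ref{Thm.FPP} and that $\Lambda\in\rho+\mathbb{Z}^{n+2}$ forces integer consecutive differences) is sound and simply makes explicit what the paper leaves implicit.
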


\begin{lemma}\label{fully.supported.condition}
    Let $G=U(n,2)$ and $X$ be an irreducible $(\mathfrak g,K)$-module corresponding to either a type (a) or a type (b) $\theta$-stable combinatorial datum. Then $X$ is fully supported if and only if all of the following conditions are satisfied:\\
    (1) $\Theta-\nu_\Theta<\Phi+\nu_\Phi;$\\
    (2) At least one of $\Theta+\nu_\Theta$ and $\Phi+\nu_\Phi$ is strictly greater than $\gamma$;\\
    (3) At least one of $\Theta-\nu_\Theta$ and $\Phi-\nu_\Phi$ is strictly less than $-\gamma$.
\begin{proof}
An irreducible module $X$ is fully supported if and only if it cannot be obtained by cohomological induction from the weakly good range. Then the lemma is a special case of lemma \ref{lemma.fully.supported.condition}.
\end{proof}
\end{lemma}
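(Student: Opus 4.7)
The plan is to unfold Lemma \ref{lemma.fully.supported.condition} for the specific shape of a type (a) or type (b) datum. That lemma says $X$ is fully supported iff there is no non-trivial partition $\mathcal D = \coprod_i D_i$ of the datum whose segments satisfy $b_1 \geq e_2 \geq b_2 \geq \cdots$. Since each individual inequality $b_i \geq e_{i+1}$ involves only the two adjacent groups $D_i, D_{i+1}$, a $k$-part partition is valid iff each of its $k-1$ cuts is valid as a $2$-part cut (coarsening preserves the first inequality). So the question reduces to: when is there no 2-part cut with $b_1 \geq e_2$?

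I would classify the possible 2-part cuts into three regions: (A) cuts above $*_1$ (among the top $(1,0)$-blocks or between them and $*_1$); (B) cuts between $*_1$ and $*_2$ (among the middle blocks, adjacent to one of the $*$-blocks, or the direct cut when $l=0$); and (C) cuts below $*_2$, symmetric to (A). In each, I use that a $(1,0)$-block contributes only its own value as an entry, while a $*$-block at content $V$ with parameter $\nu_V$ contributes the entries $V \pm \nu_V$.

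For (A), a cut whose $\Lambda_1$ ends at a top block of value $v$ yields $b_1 = v$, and since $*_1, *_2 \in \Lambda_2$ one has $e_2 = \max(\Theta + \nu_\Theta, \Phi + \nu_\Phi)$ (the value $v-1$ from the next $(1,0)$-block, if any, is absorbed by $v \geq v-1$). So the cut succeeds iff $v \geq \max(\Theta + \nu_\Theta, \Phi + \nu_\Phi)$; the most permissive choice $v = \gamma$ gives precisely the negation of (2). Case (C) is symmetric: the most permissive cut sits just above the last bottom block at $-\gamma$, yielding the negation of (3). For (B), any such cut forces $*_1 \in \Lambda_1, *_2 \in \Lambda_2$, so $b_1 \leq \Theta - \nu_\Theta$ and $e_2 \geq \Phi + \nu_\Phi$; the condition $b_1 \geq e_2$ then implies the negation of (1). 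Conversely, assuming $\Theta - \nu_\Theta \geq \Phi + \nu_\Phi$, I would exhibit a valid cut: if $l=0$, the direct cut between $*_1$ and $*_2$ gives $b_1 = \Theta - \nu_\Theta \geq \Phi + \nu_\Phi = e_2$; if $l \geq 1$, a cut at a middle block of value $v$ in the interval $[\max(\Phi+1, \Phi+\nu_\Phi), \min(\Theta-1, \Theta-\nu_\Theta+1)]$ works.

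The main obstacle will be the sufficiency of (B) when $l \geq 1$: showing that the middle-block lattice $\{\Phi+1, \Phi+2, \dots, \Theta-1\}$ always meets the required interval whenever (1) fails. Here the fundamental assumption that consecutive block contents differ by $1$ is essential; I would split into subcases $\nu_\Theta \leq 1$, $\nu_\Phi \leq 1$, and $\nu_\Theta, \nu_\Phi > 1$, choosing respectively $v = \Theta - 1$, $v = \Phi + 1$, or $v = \Theta - \lceil \nu_\Theta \rceil$, and verifying each time that the remaining condition reduces to the failure of (1) via the estimate $\nu_\Theta + \nu_\Phi \leq \Theta - \Phi$. Type (b) is then the $\Theta = \Phi$, $l = 0$ degeneration of type (a): the single $(r,2)$-block supplies both $\nu$-parameters at one position, making (B) automatic, while (A) and (C) are unchanged. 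Combining, no 2-part cut succeeds iff all of (1), (2), (3) hold, giving the claim.
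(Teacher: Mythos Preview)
Your approach is exactly the paper's: both invoke Lemma~\ref{lemma.fully.supported.condition} and specialize it to the type~(a)/(b) shape. The paper's own proof is a two-sentence citation, leaving the unwinding to the reader; you have actually carried out that unwinding, and your reduction to $2$-part cuts followed by the (A)/(B)/(C) trichotomy is the right organization.

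Two small points to tighten. First, in case~(B) for $l\geq 1$ you write ``the fundamental assumption that consecutive block contents differ by $1$,'' but when a $*$-block is a $(1,1)$-rectangle the adjacent difference is $\tfrac12$, so the middle-block lattice sits at $\Theta-\tfrac12,\Theta-\tfrac32,\dots$ and your explicit choices $v=\Theta-1$, $v=\Phi+1$, $v=\Theta-\lceil\nu_\Theta\rceil$ may not land on it. The fix is cosmetic: work with the actual first and last middle contents $M_1=\Theta-c_1$, $M_l=\Phi+c_2$ ($c_1,c_2\in\{\tfrac12,1\}$), treat the boundary cuts $j=0,l$ by the conditions $\nu_\Theta\leq c_1$, $\nu_\Phi\leq c_2$, and in the remaining case use that $[\Phi+\nu_\Phi,\Theta-\nu_\Theta+1]$ has length $\geq 1$ and lies inside $(M_l,M_1)$, hence meets the spacing-$1$ lattice $\{M_1,\dots,M_l\}$. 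Second, your (A)/(C) analysis implicitly assumes $k\geq 1$ (resp.\ $m\geq 1$); when $k=0$ the first block is $*_1$, so $\gamma=\Theta$, and then failure of~(2) forces $\nu_\Theta=0$ and $\Phi+\nu_\Phi\leq\Theta$, whence~(1) also fails and a (B)-cut does the job. Neither issue affects the validity of the overall scheme.
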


\noindent\textbf{Remark.} An intuitive way to understand the lemma above is to watch the arrows denoting the $\nu_\Theta$ and $\nu_\Phi$. The interior of the arrows, i.e. the intervals $(\Theta-\nu_\Theta, \Theta+\nu_\Theta)$ and $(\Phi-\nu_\Phi,\Phi+\nu_\Phi)$ together should cover all of the contents of $\lambda_a$.

\subsection{Fully supported unitary module candidates}

In this subsection, we search for and count all fully supported unitary modules which satisfy the necessary conditions on the infinitesimal characters stated in Lemma \ref{lemma.H.P.condition} and  Lemma \ref{Lambda.difference.one}. We will discuss the cases when $n>2$. When $n=1$, the considered group is essentially the same as $U(2,1)$; when $n=2$, all fully supported modules we need are shown in example \ref{eg.u22}. In \cite{WongZhang24}, the authors listed all fully supported unitary modules with integral infinitesimal characters and proved that they are all $A_\mathfrak{q}(\lambda)$ modules. Therefore, we will also show the corresponding $\mathfrak q$ and $\lambda$ for the candidates that we find. Moreover, we will count the  number of different $\theta$-stable combinatorial data for each $U(u,2)$. It is important to note that by tensoring a product of the determinant representation, the  data does not change except for adding an integer constant to every block. So, there are a finite number of distinct data shapes but infinitely many representations behind them.

\begin{proposition}\label{prop.fund.1}
    Let $G=U(n,2)$ and $X$ be a fully supported irreducible unitary $(\mathfrak g,K)$-module. Suppose the combinatorial $\theta$-stable datum attached to $X$ is of type (a) and has a size $(1,1)$ parallelogram.
\begin{enumerate}
\item After tensoring a suitable power of the determinant representation, the datum must be
 equivalent to
    \begin{center}
    \begin{tikzpicture}

        \foreach \y in {4, 5.3, 7, 8.7}
    \draw (\y,1)
    -- (\y+0.2,0)
    -- (\y+0.4,0)
    -- (\y+0.6,1)
    -- cycle;

\foreach \y in {5, 8.2}
    \draw (\y,0.5) node {$\cdots$};

\draw (6.4,0.5) node {$\ast_2$};

\foreach \y in {3.4}
    \draw (\y-0.1,1) -- (\y-0.3, 0) -- (\y+0.2, 0) -- (\y+0.4, 1) -- cycle;

\foreach \x in {4.3}
    \draw decorate [decoration={brace, amplitude=10pt, mirror}]{(\x,-0.2) -- (\x+1.3,-0.2)};

\foreach \x in {7.3}
    \draw decorate [decoration={brace, amplitude=10pt, mirror}]{(\x,-0.2) -- (\x+1.7,-0.2)};
    
\draw (5,-0.8) node {$l$}; \draw (8.2,-0.8) node {$l+1$};
 
\draw[arrows = {-Stealth[]}]    (6.2,1) to [out=135,in=45]node[above]{\scriptsize $\nu_\Phi$} (2.8,1);

\draw[arrows = {-Stealth[]}]    (6.4,1) to [out=45,in=135]node[above]{\scriptsize $-\nu_\Phi$} (9.8,1);

\end{tikzpicture},
\end{center}
or a left-right mirror image of it. In total, there are only two such combinatorial 
$\theta$-stable data.
\item The module corresponding to the $\theta$-stable combinatorial datum is an $A_{\mathfrak q}(\lambda)$-module where the Levi subalgebra $\mathfrak l_0\cong \mathfrak u(0,1)\oplus \mathfrak u(n,1) $ and $\lambda=(-1;1,1,\cdots,1)$. For the left-right mirror image of the datum in (1), it corresponds to an $A_\mathfrak{q}(\lambda)$ where $\mathfrak l_0\cong \mathfrak u(n,1)\oplus \mathfrak u(0,1)$ and $\lambda=(-1,\cdots,-1;1)$.
\end{enumerate}
\begin{proof}
When the $\ast_1$ position is a $(1,1)$-parallelogram, we have $\nu_\Theta=0$ and $0\leqslant \nu_\Phi \leqslant \min \{ l+1,m \}+\frac{r}{2}$, where the size of the block at $\ast_2$ is $(r,1)$. Suppose the datum gives rise to a fully supported unitary module, then lemma \ref{fully.supported.condition} implies that $k=0$, $l+1=m$ and $\nu_\Phi=l+1+\frac{r}{2}$. When the $\ast_2$ position is a $(1,1)$-parallelogram, the datum is just the left-right mirrored image.

Part (2) is from \cite[Proposition 6.2]{WongZhang24}.
\end{proof}
\end{proposition}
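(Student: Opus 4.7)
The plan is to squeeze the parameters of the datum between the unitarity conditions of Example~\ref{eg.u(n2).fundamental.cases} and the full-support criterion of Lemma~\ref{fully.supported.condition}. First, since $\ast_1$ is a $(1,1)$-parallelogram leaning toward $\ast_2$, the parallelogram paragraph of Example~\ref{eg.u(n2).fundamental.cases} immediately gives $\nu_\Theta = 0$ and $0 \leq \nu_\Phi \leq \min\{l+1,m\} + r/2$, where $(r,1)$ is the size of the block at $\ast_2$ with $r \in \{1,2\}$.

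Next I apply Lemma~\ref{fully.supported.condition} with $\nu_\Theta = 0$. Since $\Theta \leq \gamma$, condition (2) forces $\Phi + \nu_\Phi > \gamma$; since $\Theta \geq -\gamma$, condition (3) forces $\Phi - \nu_\Phi < -\gamma$. Together these produce
\[
\nu_\Phi > \gamma + |\Phi|.
\]
I then translate back into block counts: since neighboring $(1,0)$-trapezoids differ in content by $1$ while a trapezoid--rectangle transition differs by $1/2$, one reads off $\gamma - \Phi = k + l + r/2$ and $\gamma + \Phi = m - 1 + r/2$. Substituting into the upper bound from the first step yields the simultaneous inequalities
\[
k + l < \min\{l+1,m\} \quad \text{and} \quad m - 1 < \min\{l+1,m\},
\]
which force $k = 0$, $m = l+1$, and $\nu_\Phi = l + 1 + r/2$. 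The block-total equation $k + l + m = n - r - 1$ then gives $l = (n-r-2)/2$, and the admissibility parity $\gamma + (\epsilon+1)/2 \in \mathbb{Z}$ uniquely determines $r \in \{1,2\}$ from the parity of $n$. Repeating the argument with the roles of $\ast_1$ and $\ast_2$ swapped yields the left-right mirror, for a total of two data.

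For part (2) I appeal to \cite[Proposition 6.2]{WongZhang24}, which identifies the module at each of these two data with the stated $A_\mathfrak{q}(\lambda)$: Levi $\mathfrak{u}(0,1) \oplus \mathfrak{u}(n,1)$ and $\lambda = (-1;1,\ldots,1)$ for the displayed shape, and the reversed Levi $\mathfrak{u}(n,1) \oplus \mathfrak{u}(0,1)$ with $\lambda = (-1,\ldots,-1;1)$ for the mirror. The main obstacle is the half-step bookkeeping around the rectangle at $\ast_2$: one needs the $1/2$-drop between a trapezoid and a rectangle to align exactly with the $r/2$ term in the unitarity bound, so that the lower bound from Lemma~\ref{fully.supported.condition} and the upper bound from Example~\ref{eg.u(n2).fundamental.cases} meet at the single integral-character value $\nu_\Phi = l + 1 + r/2$.
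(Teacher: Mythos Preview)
Your proof is correct and follows essentially the same route as the paper's: invoke the unitarity bound from Example~\ref{eg.u(n2).fundamental.cases} for the parallelogram case ($\nu_\Theta=0$, $\nu_\Phi\leqslant\min\{l+1,m\}+r/2$), combine with the full-support criterion of Lemma~\ref{fully.supported.condition} to force $k=0$, $m=l+1$, $\nu_\Phi=l+1+r/2$, and then cite \cite[Proposition~6.2]{WongZhang24} for part~(2). The paper's proof is terse and simply asserts these consequences; you have filled in the block-count translation $\gamma-\Phi=k+l+r/2$, $\gamma+\Phi=m-1+r/2$ and the parity argument pinning $r$ from $n$, which the paper omits.

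One small remark: your phrase ``the admissibility parity $\gamma+(\epsilon+1)/2\in\mathbb{Z}$ uniquely determines $r$'' is slightly indirect---what actually fixes $r$ is that $l=(n-r-2)/2$ must be a nonnegative integer, forcing $r\equiv n\pmod 2$. Also, both your argument and the paper's implicitly use integrality of the infinitesimal character to collapse the interval $l+r/2<\nu_\Phi\leqslant l+1+r/2$ to a single point; this hypothesis is not stated in the proposition itself but is assumed throughout Section~4.2.
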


For the rest of the data in type (a) and type (b), our discussion based on example \ref{eg.u(n2).fundamental.cases} and lemma \ref{fully.supported.condition}. We start by discussing the unitary representations inside the fundamental rectangle, and then discuss those outside the fundamental rectangle.

\begin{lemma}
     Let $X$ be an irreducible representation of $U(n,2)$ whose combinatorial $\theta$-stable datum is in example \ref{eg.u(n2).fundamental.cases} and  has no parallelogram. Assume $X$ has infinitesimal character in $\rho+\mathbb Z^{n+2}$. Suppose that $\nu$ is in the fundamental rectangle. If $X$ is fully supported, then $\nu$ must be on the boundary of the fundamental rectangle.
\begin{proof}
An observation of the data is that $2\gamma+3=n$. We start with conditions (2) and (3) of lemma \ref{fully.supported.condition}.

Assume $\Theta+\nu_\Theta > \gamma$. Since $X$ has integral infinitesimal character, we have $\Theta+\nu_\Theta \geqslant \gamma+1$. After simplification, we have $\nu_\Theta\geqslant \frac{n-1}{2}-\Theta$. Combining it with the fundamental rectangle inequality $0\leqslant \nu_\Theta\leqslant \frac{n-1}{2}-|\Theta|$, we can deduce that $\Theta\geqslant 0$ and $\nu_\Theta=\frac{n-1}{2}-\Theta$.

By similar arguments, we can summarize and have the following list:
\begin{itemize}
    \item $\Theta+\nu_\Theta > \gamma \Rightarrow$ $\Theta\geqslant 0$ and $\nu_\Theta=\frac{n-1}{2}-\Theta$;
    \item $\Theta-\nu_\Theta <-\gamma \Rightarrow$ $\Theta\leqslant 0$ and $\nu_\Theta=\frac{n-1}{2}+\Theta$;
    \item $\Phi+\nu_\Phi > \gamma \Rightarrow$ $\Phi\geqslant 0$ and $\nu_\Phi=\frac{n-1}{2}-\Phi$;
    \item $\Phi-\nu_\Phi <-\gamma \Rightarrow$ $\Phi\leqslant 0$ and $\nu_\Phi=\frac{n-1}{2}+\Phi$;
\end{itemize}
Every case forces $(\nu_\Theta,\nu_\Phi)$ to stay on the boundary of the fundamental rectangle.
     \end{proof}
 \end{lemma}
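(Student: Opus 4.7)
The plan is to combine the fully supported conditions from Lemma~\ref{fully.supported.condition} with the integrality hypothesis $\Lambda(X)\in\rho+\mathbb{Z}^{n+2}$, upgrading each strict inequality into an integer bound that can be matched against the defining inequalities of the fundamental rectangle. A preliminary step is to record a counting identity relating $\gamma$ and $n$: by tabulating the top-row contents of a type~(a) or type~(b) datum without a parallelogram, one starts from $\gamma$ on the left, ends at $-\gamma$ on the right, and has unit differences between consecutive block contents (since the datum is fundamental and the infinitesimal character is integral). This yields the formula $\gamma+1=\tfrac{n-1}{2}$ uniformly, which lets the conditions phrased in terms of $\gamma$ be rewritten in the language defining the rectangle.

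Next I would process each of the four conditions coming from Lemma~\ref{fully.supported.condition}(2),(3) in turn. For instance, from ``$\Theta+\nu_\Theta>\gamma$'' together with $\Theta+\nu_\Theta\in\mathbb{Z}$ (integrality of the infinitesimal character), one gets $\Theta+\nu_\Theta\geqslant\gamma+1=\tfrac{n-1}{2}$, hence $\nu_\Theta\geqslant\tfrac{n-1}{2}-\Theta$. Combined with the fundamental rectangle bound $\nu_\Theta\leqslant\tfrac{n-1}{2}-|\Theta|$, the inequality $-\Theta\leqslant-|\Theta|$ forces $\Theta\geqslant 0$, and then both bounds coincide to yield $\nu_\Theta=\tfrac{n-1}{2}-\Theta$; that is, $\nu_\Theta$ saturates its upper bound. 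The remaining three conditions (``$\Theta-\nu_\Theta<-\gamma$'', ``$\Phi+\nu_\Phi>\gamma$'', ``$\Phi-\nu_\Phi<-\gamma$'') are handled identically modulo the obvious sign changes, each forcing one of $\nu_\Theta, \nu_\Phi$ onto a specific edge of the rectangle.

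Finally, Lemma~\ref{fully.supported.condition}(2) requires at least one of the two ``upper'' disjuncts to hold and Lemma~\ref{fully.supported.condition}(3) requires at least one of the two ``lower'' disjuncts to hold, so at least one component of $(\nu_\Theta,\nu_\Phi)$ is forced to its boundary value. In every case $(\nu_\Theta,\nu_\Phi)$ must therefore lie on the boundary of the fundamental rectangle, which is exactly what the lemma asserts.

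The main obstacle I anticipate is justifying the counting identity $\gamma+1=\tfrac{n-1}{2}$ uniformly across the admissible block configurations: one must carefully account for the widths $r_1,r_2\in\{1,2\}$ of the blocks at $\ast_1,\ast_2$ in type~(a) (and $r\in\{2,3\}$ in type~(b)) and verify the relation is independent of these choices under the no-parallelogram assumption. Once this combinatorial step is in place, the rest of the argument is a direct manipulation of integer inequalities against the two defining bounds of the rectangle.
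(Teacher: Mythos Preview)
Your proposal is correct and follows essentially the same approach as the paper: both begin with the counting identity $2\gamma+3=n$ (equivalently $\gamma+1=\tfrac{n-1}{2}$), then use conditions (2) and (3) of Lemma~\ref{fully.supported.condition} together with integrality to upgrade each strict inequality to $\geqslant\gamma+1$, and finally compare with the fundamental rectangle bound $\nu_\bullet\leqslant\tfrac{n-1}{2}-|\bullet|$ to force equality on an edge. The only minor refinement is that what integrality gives you is that $\Theta+\nu_\Theta$ and $\gamma$ lie in the same $\mathbb{Z}$-coset (both being coordinates of $\Lambda(X)$), not necessarily that $\Theta+\nu_\Theta\in\mathbb{Z}$ outright; this is enough for the upgrade $\Theta+\nu_\Theta\geqslant\gamma+1$ and the rest proceeds exactly as you outline.
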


The lemma above implies that there are three possible cases:
\begin{itemize}
    \item $\Theta\geqslant 0\geqslant \Phi$, $\Theta+\nu_\Theta=\gamma+1$, and $\Phi-\nu_\Phi=-\gamma-1$;
    \item $\Theta=0$, $\nu_\Theta=\gamma+1$;
    \item $\Phi=0$, $\nu_\Phi=\gamma+1$.
\end{itemize}

The last two cases are essentially the same, and we can put all representations from type (b) into these cases. We use two propositions to describe all the cases mentioned above.

\begin{proposition}\label{prop.fund.2}
    Let $G=U(n,2)$ and $X$ be a fully supported irreducible unitary $(\mathfrak g,K)$-module with infinitesimal character in $\rho+\mathbb Z^{n+2}$. Suppose the combinatorial $\theta$-stable datum attached to $X$ is of type (a) and each $\ast_i$ position is an $(r_i,1)$-rectangle/trapezoid, $r_i=1,2$. Assume $\Theta\geqslant 0\geqslant \Phi$ and $(\nu_\Theta,\nu_\Phi)=(\gamma+1-\Theta,\gamma+1+\Phi)$. Then we have the following facts:
    \begin{enumerate}
\item The combinatorial $\theta$-stable datum is
\begin{center}
    \begin{tikzpicture}

\foreach \y in {1, 2.3, 4, 7.3, 9, 10.3}
    \draw (\y,1)
    -- (\y+0.2,0)
    -- (\y+0.4,0)
    -- (\y+0.6,1)
    -- cycle;

\foreach \y in {2, 5.5, 6.5, 10}
    \draw (\y,0.5) node {$\cdots$};

\draw (3.4,0.5) node {$\ast_1$};  \draw (8.4,0.5) node {$\ast_2$};
\draw (3.4,1) node {$\Theta$};  \draw (8.4,1) node {$\Phi$};
\draw (1.3,1.2) node {$\gamma$};
\draw (10.6,1.2) node {$-\gamma$};

\foreach \x in {1.3, 9.3}
    \draw decorate [decoration={brace, amplitude=10pt, mirror}]{(\x,-0.2) -- (\x+1.3,-0.2)};
\foreach \x in {4.3}
    \draw decorate [decoration={brace, amplitude=10pt, mirror}]{(\x,-0.2) -- (\x+3.3,-0.2)};
    
\draw (2,-0.8) node {$k$}; \draw (6,-0.8) node {$l$}; \draw (10,-0.8) node {$m$};

\draw[arrows = {-Stealth[]}] (3.2,1.2) to [out=135,in=45]node[above]{\scriptsize $\nu_\Theta$} (0.6,1);
\draw[arrows = {-Stealth[]}] (3.6,1.2) to [out=45,in=135]node[above]{\scriptsize $-\nu_\Theta$}(6.2,1);

\draw[arrows = {-Stealth[]}] (8.2,1.2) to [out=135,in=45]node[above]{\scriptsize $\nu_\Phi$} (5.6,1);
\draw[arrows = {-Stealth[]}] (8.6,1.2) to [out=45,in=135]node[above]{\scriptsize $-\nu_\Phi$}(11.2,1);

\end{tikzpicture}
\end{center}
In the datum, $(\Phi+\nu_\Phi)-(\Theta-\nu_\Theta)=1$. In other words, the difference between ends of the two arrows is $1$.
\item There are $n-1$ many different such combinatorial $\theta$-stable data.
\item The corresponding module is an $A_\mathfrak{q}(\lambda)$, where $\mathfrak l_0\cong \mathfrak u(n-2\Theta-1,1) \oplus \mathfrak u (n+2\Phi-1,1)$ and $\lambda=(-1,\cdots,-1;1,\cdots,1)$.
    \end{enumerate}
\begin{proof}
    The positions of the arrows on the left and right sides are determined by the value of $(\nu_\Theta,\nu_\Phi)$. The arrows in the middle intersect with each other because of part (1) of Lemma \ref{fully.supported.condition}, which is $\nu_\Theta+\nu_\Phi>\Theta-\Phi$. The coordinate $(\nu_\Theta,\nu_\Phi)=(\gamma+1-\Theta,\gamma+1+\Phi)$ is on the vertex of the fundamental rectangle. Recall that in Example \ref{eg.u(n2).fundamental.cases}, $(\nu_\Theta,\nu_\Phi)$ should satisfy one of a list of conditions to make $X$ unitary. In fact, when $(\nu_\Theta,\nu_\Phi)$ is the vertex, the only possible condition it can satisfy is $\nu_\Theta+\nu_\Phi\leqslant\Theta-\Phi+1$.
    
    Define $\tilde k:=k+\frac{r_1}{2}$, $\tilde m:=m+\frac{r_2}{2}$, and $\tilde l:= l-1+\frac{r_1+r_2}{2}$. Then, regardless of the value of $r_i$, one has $\tilde l=\Theta-\Phi$, and $\nu_\Theta+\nu_\Phi=2(\gamma+1)-\tilde l$. Substituting the value of $\nu_\Theta+\nu_\Phi$ into the inequalities from the previous paragraph, then we have
$$\tilde l<2(\gamma+1)-\tilde l\leqslant \tilde l+1.$$
    Because $l$ is an integer, we must have $2(\gamma+1)-\tilde l=\tilde l+1$. Then
    $$(\Phi+\nu_\Phi)-(\Theta-\nu_\Theta)=(\nu_\Theta+\nu_\Phi)-(\Theta-\Phi)=(2(\gamma+1)-\tilde l)-\tilde l=1.$$
    The proof of part (1) is complete.

    Notice that $2\gamma+3=n=\tilde k+\tilde l+\tilde m+1$. Combining the equation $2(\gamma+1)-\tilde l=\tilde l+1$, we have $\tilde k+\tilde m=\frac{n}{2}$. Because $\tilde k=k+\frac{r_1}{2}$, there are $n-1$ many choices for $\tilde k$. Each of the choices gives a different combinatorial $\theta$-stable datum. The proof of part (2) is done.

    In  \cite[Section 5]{WongZhang24}, the authors have proved that these modules are $A_\mathfrak{q}(\lambda)$-modules with $\mathfrak l_0=\mathfrak{u}(n-2\Theta-1,1)\oplus \mathfrak{u}(n+2\Phi-1,1)$. It is easy to figure out the value of $\lambda$ by $\lambda+\rho=\Lambda$. The proof of part (3) is complete.
\end{proof}
\end{proposition}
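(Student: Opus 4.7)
The approach is to handle the three parts in turn, drawing on the unitarity classification in Example \ref{eg.u(n2).fundamental.cases} together with the full-support criterion from Lemma \ref{fully.supported.condition}, and then invoking the $A_\mathfrak q(\lambda)$-description from \cite{WongZhang24} for the structural part.

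For part (1), I would first note that the hypothesis $(\nu_\Theta,\nu_\Phi)=(\gamma+1-\Theta,\gamma+1+\Phi)$, combined with $\Theta\geqslant 0\geqslant \Phi$ and the identity $2\gamma+3=n$, places this point at the upper-right vertex of the fundamental rectangle (\ref{fundamental.rectangle}). Thus $X$ falls into case (i) of the unitarity list in Example \ref{eg.u(n2).fundamental.cases}, and a quick inspection shows that among the triangle and line conditions there, only the first inequality $\nu_\Theta+\nu_\Phi\leqslant \Theta-\Phi+1$ is consistent with the vertex position (the $k\geqslant 1$ inequalities violate the rectangle bound, and the line equalities are incompatible with the prescribed equal coordinates). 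Combining this with the strict full-support inequality $\nu_\Theta+\nu_\Phi>\Theta-\Phi$ from Lemma \ref{fully.supported.condition}(1) yields the sandwich $\Theta-\Phi<\nu_\Theta+\nu_\Phi\leqslant \Theta-\Phi+1$.

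To turn this sandwich into the claimed equality, I would introduce the renormalized parameters $\tilde k=k+r_1/2$, $\tilde m=m+r_2/2$, and $\tilde l=l-1+(r_1+r_2)/2$, chosen so that uniformly across the four cases $(r_1,r_2)\in\{1,2\}^2$ one has $\Theta-\Phi=\tilde l$ and $\nu_\Theta+\nu_\Phi=2(\gamma+1)-\tilde l$. The sandwich then reads $\tilde l<2(\gamma+1)-\tilde l\leqslant \tilde l+1$, and the integrality of the infinitesimal character (via Lemma \ref{lemma.H.P.condition}) forces $2(\gamma+1)-\tilde l$ and $\tilde l$ to differ by an integer, so the only possibility is $2(\gamma+1)-\tilde l=\tilde l+1$. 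Rewriting this as $(\Phi+\nu_\Phi)-(\Theta-\nu_\Theta)=(\nu_\Theta+\nu_\Phi)-(\Theta-\Phi)=1$ closes part (1).

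For part (2), I would combine the equation just obtained, $2(\gamma+1)=2\tilde l+1$, with the total-size relation $n=2\gamma+3=\tilde k+\tilde l+\tilde m+1$ to conclude $\tilde k+\tilde m=n/2$. Since $\tilde k=k+r_1/2$ ranges over a set of $n-1$ admissible values (indexed by the choice of $k$ and $r_1$, with $\tilde m$ then determined), each producing a distinct shape, the count of $n-1$ follows. Part (3) is essentially immediate: \cite[Section 5]{WongZhang24} already identifies such fully supported unitary data with $A_\mathfrak q(\lambda)$-modules whose Levi is $\mathfrak u(n-2\Theta-1,1)\oplus\mathfrak u(n+2\Phi-1,1)$, and the value of $\lambda$ is recovered by solving $\lambda+\rho=\Lambda(X)$, which produces the uniform string of $-1$'s and $1$'s. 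The step I expect to be most delicate is the uniform renormalization in part (1): one must check case-by-case that the substitution $\Theta-\Phi=\tilde l$ is genuinely independent of the rectangle/trapezoid choices at $\ast_1,\ast_2$, because otherwise the clean sandwich argument breaks down.
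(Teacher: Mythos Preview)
Your proposal is correct and follows essentially the same approach as the paper's proof: the same vertex-of-the-rectangle observation, the same sandwich $\Theta-\Phi<\nu_\Theta+\nu_\Phi\leqslant\Theta-\Phi+1$, the identical renormalization $\tilde k,\tilde l,\tilde m$, and the same counting and citation for parts (2) and (3). The only minor difference is that you justify the integrality step via Lemma~\ref{lemma.H.P.condition} (the infinitesimal character lies in $\rho+\mathbb Z^{n+2}$), whereas the paper invokes integrality more tersely; your version is arguably cleaner on this point.
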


\begin{proposition}\label{prop.fund.3}
    Let $G=U(n,2)$ and $X$ be a fully supported irreducible unitary $(\mathfrak g,K)$-module with infinitesimal character in $\rho+\mathbb Z^{n+2}$. Suppose the combinatorial $\theta$-stable datum attached to $X$ is one of the following cases.
\begin{itemize}
    \item It is of type (a) and each $\ast_i$ position is an $(r_i,1)$-rectangle/trapezoid; And $\Theta= 0\geqslant \Phi$ and $\nu_\Theta=\gamma+1$.
    \item It is of type (a) and each $\ast_i$ position is an $(r_i,1)$-rectangle/trapezoid; And $\Theta \geqslant 0 = \Phi$ and $\nu_\Phi=\gamma+1$.
    \item It is of type (b) with a $0$-block in the middle and $\nu_\Theta=\gamma+1$.
\end{itemize}
   Then we have the following facts:
\begin{enumerate}
\item In the combinatorial $\theta$-stable datum, the pair of the arrows reaching both sides, i.e. $\pm(\gamma+1)$, come from the $0$-block as shown in the picture; and the other pair of arrows is restricted inside $[-\gamma,\gamma]$. 
\begin{center}
    \begin{tikzpicture}
\foreach \y in {0, 2.3, 4, 6.3}
    \draw (\y,1)
    -- (\y+0.2,0)
    -- (\y+0.4,0)
    -- (\y+0.6,1)
    -- cycle;

\foreach \y in {1.5, 5, 6}
    \draw (\y,0.5) node {$\cdots$};

\draw (3.4,0.5) node {$\ast$};
\draw (3.4,1) node {$0$};
\draw (0.3,1.2) node {\scriptsize $\gamma$};
\draw (6.5,1.2) node {\scriptsize $-\gamma$};

\draw (5.4,0.5) node {$\ast$};
\draw[arrows = {-Stealth[]}] (5.3,0.8) to [out=105,in=45]node[above]{} (4.8,1.2);
\draw[arrows = {-Stealth[]}] (5.5,0.8) to [out=75,in=135]node[above]{} (6.0,1.2);



\draw[arrows = {-Stealth[]}] (3.2,1.2) to [out=135,in=45]node[above]{\scriptsize $\gamma+1$} (-0.5,1.2);
\draw[arrows = {-Stealth[]}] (3.6,1.2) to [out=45,in=135]node[above]{\scriptsize $-(\gamma+1)$} (7.3,1.2);
\draw (2.6,1.2) node {\scriptsize $\frac{\epsilon}{2}$};
\draw (4.3,1.2) node {\scriptsize $-\frac{\epsilon}{2}$};

\end{tikzpicture}
\end{center}
In the picture, $\epsilon=1$ or $2$ and $\epsilon\equiv n+1 ~(\emph{mod}~2)$. When $n$ is even, the $0$-block in the middle is a rectangle of size $(1,1)$ if the datum is of type (a), or of size $(2,2)$ if the datum is of type (b). When $n$ is odd, the $0$-block in the middle is a trapezoid of size $(2,1)$ if the datum is of type (a), or of size $(3,2)$ if the datum is of type (b).
\item If we require the corresponding modules satisfy the H.P.-condition, all three cases together gives $\frac{(n-2)(n-3)}{2}$ different combinatorial $\theta$-stable data.
\item The corresponding module is an $A_\mathfrak{q}(\lambda)$. When $\Phi\geqslant 0=\Theta$, the Levi subalgebra $\mathfrak l_0\cong \mathfrak u(n-1,1)\oplus\mathfrak u(0,1) \oplus\mathfrak u(1,0)$, and $\lambda=(-1,-1,\cdots,-1;\Phi+\nu_\Phi+\frac{n-1}{2};\Phi-\nu_\Phi+\frac{n+1}{2})$. When $\Theta\geqslant 0=\Phi$, the Levi subalgebra $\mathfrak l_0\cong \mathfrak u(1,0) \oplus\mathfrak u(0,1) \oplus\mathfrak u(n-1,1)$, and $\lambda=(\Theta+\nu_\Theta-\frac{n+1}{2};\Theta-\nu_\Theta-\frac{n-1}{2};1,1,\cdots,1)$.
\end{enumerate}
\begin{proof}
In part (1), the statement about arrows is trivial. Recall $2\gamma+3=n$, and that $\gamma-\frac{\epsilon}{2}\in\mathbb Z$. When $n$ is even, $\gamma\in \mathbb Z+\frac{1}{2}$, and hence $\epsilon=1$. The $0$-block has to be a rectangle due to definition \ref{def.gamma.block}. A similar argument applies when $n$ is odd.

For part (2), assume the other pair of arrows comes out from the $x$-block. The datum is of type (b) when $x=0$. Otherwise, it is of type (a). Say these arrows denote $\nu_x$. The value of $x$ could be any integer and half integer in the interval $(\gamma,\gamma)$. With a fixed $x$-block, for example when $x\geqslant 0$, the arrow for $x+\nu_x$ could end at any block of size $(1,0)$ between $\gamma$ and $x$. The H.P.-condition (Lemma \ref{lemma.H.P.condition}) works when the $x$-block is a trapezoid. In such case, it wipes out the choice $\nu_x=0$. Otherwise, the value $x$ will show up three times in the infinitesimal character. Therefore, $\nu_x$ could be either $\{1,2,\cdots\}$ or $\{\frac{1}{2},\frac{3}{2},\frac{5}{2},\cdots\}$ until $x+\nu_x$ hits $\gamma$ or $x-\nu_x$ hits $-\gamma$. After all conditions are set, it is not hard to find a pattern to count all different $\theta$-stable data.

Part (3) is  complete in \cite[Section 5]{WongZhang24}.
\end{proof}
\end{proposition}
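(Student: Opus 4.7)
The plan is to handle each of the three parts in turn, using Lemma \ref{fully.supported.condition}, Example \ref{eg.u(n2).fundamental.cases}, and Theorem \ref{thm.weakly.good.ind.unitary} as black boxes. For part (1), the location of the long arrows is forced by the assumption: starting from the middle $0$-block with $\nu = \gamma+1$, the endpoints are exactly $\pm(\gamma+1)$, one step beyond the outermost $(1,0)$-blocks at contents $\pm\gamma$. The rectangle/trapezoid dichotomy follows from Definition \ref{def.gamma.block} applied with $2\gamma+3 = n$: the parity constraint $\gamma + \epsilon/2 \in \mathbb{Z}$ forces $\epsilon = 1$ for $n$ even and $\epsilon = 2$ for $n$ odd, which then determines the possible block sizes according to whether we are in type (a) or type (b). The restriction of the short arrows to $[-\gamma, \gamma]$ is proved by contradiction: if they extended past $\pm\gamma$, then $(\nu_\Theta, \nu_\Phi)$ would exit the fundamental rectangle of Example \ref{eg.u(n2).fundamental.cases}, landing on the isolated unitary points of case (ii), but those require $\Theta = \Phi$ or $\Theta\cdot\Phi > 0$, which fails under the present hypotheses (where one of $\Theta, \Phi$ is $0$ and the other has the opposite sign or is absent).

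For part (2), I would enumerate the short-arrow data as follows. Let $x$ denote the content of the block from which the short arrows emanate, so $x = \Phi \leq 0$ in the first case, $x = \Theta \geq 0$ in the second, and $x = 0$ in the type (b) case. By the left-right symmetry it suffices to enumerate configurations with $x \geq 0$ and then reflect, handling the $x = 0$ overlap and the type (b) contribution separately to avoid double-counting. For each admissible $x \in (-\gamma, \gamma)$ (an integer or half-integer of appropriate parity), the magnitude $\nu_x$ ranges over either $\{1, 2, \ldots\}$ or $\{\tfrac{1}{2}, \tfrac{3}{2}, \ldots\}$ depending on whether the $x$-block is a rectangle or a trapezoid, with the lower bound $\nu_x = 0$ excluded for trapezoids by the H.P.-condition of Lemma \ref{lemma.H.P.condition} (otherwise the content $x$ would appear three times in $\Lambda(X)$), and the upper bound $\nu_x \leq \gamma - x$ forced by part (1). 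Summing the admissible $(x, \nu_x)$ pairs yields a triangular count, which I expect to collapse to $(n-2)(n-3)/2 = \binom{n-2}{2}$ after carefully sorting out the rectangle/trapezoid parities and the type (b) term.

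For part (3), the $A_\mathfrak{q}(\lambda)$ identification follows from the partition of the $\theta$-stable datum into fundamental pieces, via the Levi recipe stated right after Theorem \ref{thm.weakly.good.ind.unitary}: the long-arrow segment contributes the large factor $\mathfrak{u}(n-1, 1)$, while the remaining $x$-block and isolated $(1,0)$-block contribute the $\mathfrak{u}(0,1)$ and $\mathfrak{u}(1,0)$ factors, in the order dictated by whether the $0$-block sits at $\Theta$ or at $\Phi$. The parameter $\lambda$ is then read off from $\lambda + \rho = \Lambda$ by direct computation of the segments, or alternatively one may cite \cite[Section 5]{WongZhang24} where the analogous identification was carried out for the unitary dual classification. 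The main obstacle is part (2): the enumeration has to track rectangle versus trapezoid parities simultaneously, reconcile the type (a) and type (b) contributions without double-counting at $x = 0$, and deliver the closed form $(n-2)(n-3)/2$ without an off-by-one error at the boundary $x + \nu_x = \gamma$.
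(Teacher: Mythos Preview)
Your proposal follows essentially the same approach as the paper, which is even terser: for part (1) it declares the arrow statement ``trivial'' and handles the parity via $2\gamma+3=n$ together with Definition~\ref{def.gamma.block}; for part (2) it enumerates $(x,\nu_x)$ pairs exactly as you describe and then says the final count is ``not hard to find''; part (3) is simply deferred to \cite[Section 5]{WongZhang24}.

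One small point: your contradiction argument for why the short arrows stay inside $[-\gamma,\gamma]$ does not cover type (b), since there $\Theta=\Phi=0$ and the condition ``$\Theta=\Phi$ or $\Theta\cdot\Phi>0$'' of Example~\ref{eg.u(n2).fundamental.cases}(ii) \emph{is} satisfied. The cleaner fix, implicit in the paper's organization, is that Propositions~\ref{prop.fund.2} and~\ref{prop.fund.3} are stated under the standing hypothesis (established in the preceding lemma) that $(\nu_\Theta,\nu_\Phi)$ lies in the fundamental rectangle \eqref{fundamental.rectangle}; the short-arrow bound then follows directly from the defining inequality $0\le\nu_x\le\tfrac{n-1}{2}-|x|$, and the outside-the-rectangle configurations are handled separately in Proposition~\ref{prop.fund.4}.
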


Now, we use a proposition to explain the fully supported, unitary representations when $(\nu_\Theta,\nu_\Phi)$ is outside the fundamental rectangle.

\begin{proposition}\label{prop.fund.4}

Let $G=U(n,2)$ and $X$ be a fully supported irreducible unitary $(\mathfrak g,K)$-module with infinitesimal character in $\rho+\mathbb Z^{n+2}$. Assume $(\nu_\Theta,\nu_\Phi)$ is outside the fundamental rectangle. Then we have the following facts:
 
\begin{enumerate}
\item The combinatorial $\theta$-stable datum of $X$ is either 
\begin{center}
    \begin{tikzpicture}

\foreach \y in {1, 2, 3.5, 5.6, 7.0, 12.2}
    \draw (\y,1)
    -- (\y+0.2,0)
    -- (\y+0.4,0)
    -- (\y+0.6,1)
    -- cycle;

\foreach \y in {1.8, 4.6, 5.1, 8.5, 10, 11.5}
    \draw (\y,0.5) node {$\cdots$};

\draw (3,0.5) node {$\ast_1$};  \draw (6.6,0.5) node {$\ast_2$};
\draw (3.0,1) node {$\Theta$};  \draw (6.6,1) node {$\Phi$};
\draw (1.3,1.2) node {$\gamma$};
\draw (12.4,1.2) node {$-\gamma$};

\foreach \x in {1.3}
    \draw decorate [decoration={brace, amplitude=10pt, mirror}]{(\x,-0.2) -- (\x+1,-0.2)};
\foreach \x in {3.8}
    \draw decorate [decoration={brace, amplitude=10pt, mirror}]{(\x,-0.2) -- (\x+2.1,-0.2)};
\foreach \x in {7.3}
    \draw decorate [decoration={brace, amplitude=10pt, mirror}]{(\x,-0.2) -- (\x+5.2,-0.2)};
    
\draw (1.8,-0.8) node {$k$}; \draw (4.8,-0.8) node {$l$}; \draw (9.8,-0.8) node {$m$};

\draw[arrows = {-Stealth[]}] (2.8,1.2) to [out=135,in=45]node[above]{\scriptsize $\nu_\Theta$} (0.6,1);
\draw[arrows = {-Stealth[]}] (3.2,1.2) to [out=45,in=135]node[above]{\scriptsize $-\nu_\Theta$}(5.4,1);

\draw[arrows = {-Stealth[]}] (6.4,1.2) to [out=135,in=45]node[above]{\scriptsize $\nu_\Phi$} (0,1.0);
\draw[arrows = {-Stealth[]}] (6.8,1.2) to [out=45,in=135]node[above]{\scriptsize $-\nu_\Phi$}(13.2,1);

\end{tikzpicture}
\end{center}

 \begin{center}
    \begin{tikzpicture}
\draw (-1,0.5) node {\emph{or}};

\foreach \y in {1, 2.3, 4, 5.3}
    \draw (\y,1)
    -- (\y+0.2,0)
    -- (\y+0.4,0)
    -- (\y+0.6,1)
    -- cycle;

\foreach \y in {2,5}
    \draw (\y,0.5) node {$\cdots$};

\draw (3.4,0.5) node {$\ast$};

\draw (1.3,1.2) node {$\gamma$};  
\draw (5.6,1.2) node {$-\gamma$};   
\draw[arrows = {-Stealth[]}] (3.2,1.2) to [out=135,in=45]node[above]{\scriptsize $\nu_\Phi$}(0.5,1);
\draw[arrows = {-Stealth[]}] (3.6,1.2) to [out=45,in=135]node[above]{\scriptsize $-\nu_\Phi$}(6.3,1);
\draw[arrows = {-Stealth[]}] (3.2,1.8) to [out=135,in=45]node[above]{\scriptsize $\nu_\Theta$}(-0.2,1);
\draw[arrows = {-Stealth[]}] (3.6,1.8) to [out=45,in=135]node[above]{\scriptsize $-\nu_\Theta$}(7,1);

\end{tikzpicture}.
\end{center}
In the first picture, each $\ast_i$ position is an $(r_i,1)$-rectangle/trapezoid with $r_i=1,2$; and in the second picture, the $\ast$ position is a $(2,2)$-rectangle or a $(3,2)$-trapezoid. And the datum of $X$ could also be left-right mirrored image of the first picture.

\item If we require the corresponding modules to satisfy the H.P.-condition, there are $2n-3$ different combinatorial $\theta$-stable data.
\item The corresponding module is an $A_\mathfrak{q}(\lambda)$. When $\Theta\geqslant\Phi>0$, the Levi subalgebra is $\mathfrak l_0\cong \mathfrak u(n-1,2) \oplus \mathfrak u (1,0)$ and $\lambda=(0,0,\cdots,0; 2\Theta+1)$; when $0>\Theta\geqslant\Phi$, $\mathfrak l_0\cong \mathfrak u(1,0) \oplus \mathfrak u (n-1,2)$ and $\lambda=(2\Phi-1;0,0,\cdots,0)$. When $\Theta=\Phi=0$, it corresponds to the trivial module.
\end{enumerate}

\begin{proof}
According to the example \ref{eg.u(n2).fundamental.cases}, $(\nu_\Theta,\nu_\Phi)=(\frac{n-1}{2}-|\Theta|, \frac{n+1}{2}-|\Phi|)$ or $(\nu_\Theta,\nu_\Phi)=(\frac{n+1}{2}-|\Theta|, \frac{n-1}{2}-|\Phi|)$. When $\Theta>\Phi>0$, $\Phi-\nu_\Phi$ has to be $-\gamma-1$ because $X$ is fully supported. Now, we have the first picture. When $0>\Theta>\Phi$, we have the left-right mirrored image of the first picture. When $\Theta=\Phi$, we may assume $\nu_\Theta\geqslant\nu_\Phi$. At least one of $|\Theta\pm \nu_\Theta|$ equals $|\gamma+2|$. Notice $\nu_\Theta-\nu_\Phi=1$, then we have the second picture. The proof of part (1) is complete.

Assume $(\nu_\Theta,\nu_\Phi)=(\frac{n-1}{2}-|\Theta|, \frac{n+1}{2}-|\Phi|)$ and $ \Theta \geqslant \Phi>0$. One has $\Phi-\nu_\Phi=-\gamma-1$ and $\Phi+\nu_\Phi=\gamma+2$. Hence $\Phi=\frac{1}{2}$. Then for each $\Theta$, there is only one such datum because $(\nu_\Theta,\nu_\Phi)$ are fixed. All possible value of $\Theta$ is $\frac{1}{2},1,\frac{3}{2},\cdots,\gamma+\frac{1}{2}$. Notice that $2\gamma+3=n$. There are $n-2$ different data. The left-right mirrored images produce same amount of data. One special case is when $\Theta=\Phi=0$. In this case, $X$ is the trivial representation. Therefore, in total there are $2n-3$ different data. The proof of part (2) is complete.

Part (3) is complete in \cite[Section 5]{WongZhang24}.

\end{proof}

\end{proposition}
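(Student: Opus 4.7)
The plan is to process the three parts of the proposition in sequence, using Example~\ref{eg.u(n2).fundamental.cases} as the starting enumeration and Lemma~\ref{fully.supported.condition} together with the integrality from Lemma~\ref{lemma.H.P.condition} as the filter.

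For Part~(1), I would begin with the list in Example~\ref{eg.u(n2).fundamental.cases} of the finitely many unitary points $(\nu_\Theta,\nu_\Phi)$ outside the fundamental rectangle, split into the three cases $\Theta=\Phi$, $\Theta>\Phi>0$, and $0>\Theta>\Phi$. For each case I would substitute the prescribed coordinates $(\nu_\Theta,\nu_\Phi)=(\tfrac{n\pm1}{2}-|\Theta|,\tfrac{n\mp1}{2}-|\Phi|)$ into the three conditions of Lemma~\ref{fully.supported.condition}. The crossing condition $\Theta-\nu_\Theta<\Phi+\nu_\Phi$ is automatic from the size of $\nu$ relative to the width of the datum. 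The ``reach-past'' conditions $\Theta+\nu_\Theta>\gamma$ and $\Phi-\nu_\Phi<-\gamma$ combined with the relation $2\gamma+3=n$ reduce to an inequality of the form $2\Phi-\gamma-2<-\gamma$, i.e.\ $\Phi<1$; together with $\Phi>0$ and the integrality of $\Lambda(X)$ from Lemma~\ref{lemma.H.P.condition}, this pins $\Phi=\tfrac12$ and produces the first picture, with its mirror arising symmetrically when $0>\Theta\geqslant \Phi$. The diagonal case $\Theta=\Phi$ forces both pairs of arrows to emanate from a single block, which after matching with Definition~\ref{def.gamma.block} must be a $(2,2)$-rectangle or a $(3,2)$-trapezoid depending on the parity of $n$; this yields the second picture.

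For Part~(2), with $|\Phi|=\tfrac12$ fixed in the two off-diagonal pictures, the only remaining parameter is $\Theta\in\{\tfrac12,1,\tfrac32,\ldots,\gamma+\tfrac12\}$, which by $2\gamma+3=n$ gives $n-2$ data from the first picture and another $n-2$ from its mirror. The diagonal case $\Theta=\Phi=0$ contributes exactly one further datum, namely the trivial representation, for a total of $2n-3$. For Part~(3), I would cite \cite[Section~5]{WongZhang24} directly for the $A_\mathfrak{q}(\lambda)$ identification, reading off the Levi $\mathfrak{l}_0$ from the partition structure indicated by the arrows of the datum and then solving $\lambda+\rho=\Lambda$ for $\lambda$ using the explicit infinitesimal character computed from $(\lambda_a,\nu)$.

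The main obstacle I anticipate is the $\Theta=\Phi$ case. Unlike the off-diagonal cases where each $\ast$ block carries a single $\nu$-arrow, here one block must support two distinct $\nu$-coordinates, so its size must be larger; the choice between a $(2,2)$-rectangle and a $(3,2)$-trapezoid has to be read off from the parity constraint $\epsilon\equiv n+1\pmod 2$ imposed by Definition~\ref{def.gamma.block}, requiring a careful case split on the parity of $n$. Verifying that $\Theta=\Phi=0$ yields precisely the trivial module, rather than some other fully supported candidate on the boundary, requires matching the explicit $\lambda_a$-datum and $\nu$-coordinates against the known combinatorial data of the trivial representation of $U(n,2)$.
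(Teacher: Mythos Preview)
Your proposal is correct and follows essentially the same route as the paper's proof: both start from the finite list of outside-the-rectangle points in Example~\ref{eg.u(n2).fundamental.cases}, feed them through the fully-supported criteria of Lemma~\ref{fully.supported.condition}, pin down $|\Phi|=\tfrac12$, count $\Theta\in\{\tfrac12,\ldots,\gamma+\tfrac12\}$ and mirror to reach $2n-3$, and cite \cite[Section~5]{WongZhang24} for the $A_{\mathfrak q}(\lambda)$ identification. One cosmetic remark: you invoke the integrality of $\Lambda(X)$ from Lemma~\ref{lemma.H.P.condition} to force $\Phi=\tfrac12$ from $0<\Phi<1$, but in fact $\Phi\in\tfrac12\mathbb{Z}$ is already automatic from the $\gamma$-block structure in Definition~\ref{def.gamma.block}; the paper likewise glosses over this and simply reads off $\Phi=\tfrac12$ from $\Phi+\nu_\Phi=\gamma+2$ and $\Phi-\nu_\Phi=-\gamma-1$.
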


\subsection{The Dirac series of $U(n,2)$}

A tricky way to prove a representation has nonzero Dirac cohomology is to prove that it has nonzero Dirac index. Let us recall the definition of Dirac index, see \cite[Section 4]{DongWong21} for more details.

We know that the Dirac cohomology of the $(\mathfrak{g},K)$-module $X$ is defined as the Kernel of $\mathrm{ker}(D)/\big(\mathrm{ker}(D)\cap \mathrm{im}(D)\big)$ in $X\otimes S(\mathfrak{p})$. When $G$ and $K$ are of equal rank, the spin module $\mathrm{Spin}_G$ can be decomposed into odd part $\mathrm{Spin}_G^-$ and even part $\mathrm{Spin}_G^+$, and $D$ maps $X\otimes \mathrm{Spin}_G^{\pm}$ to $\mathrm{Spin}_G^{\mp}$. Hence, $H_D(X)$ breaks up into the odd part and the even part, which are denoted by $H_D^-(X)$ and
$H_D^+(X)$. Then the Dirac index is defined as 
\[\mathrm{DI}(X):=H_D^+(X)-H_D^-(X)\]
in the Grothendieck group of $\widetilde{K}$-modules.

The Dirac index is easier to compute than the Dirac cohomology. It was found by \cite{MPV17} that there is a translation principle on the Dirac index. By such a principle, \cite{DongWong21} got the Dirac index of $A_{\mathfrak{q}}(\lambda)$:
\[\mathrm{DI}\big(A_{\mathfrak{q}}(\lambda)\big)=\sum_{w\in W^1(\mathfrak{l},\mathfrak{t})}\det(w)\widetilde{E}_{w(\lambda+\rho)},\]
where $W^1(\mathfrak{l},\mathfrak{t})=W(\mathfrak{l},\mathfrak{t})\cap W^1$ with 
\[W^1:=\{w\in W(\mathfrak{g},\mathfrak{t})|w\rho~{\rm is }~ \Delta^{+}(\mathfrak{k},\mathfrak{t}){\rm-dominant}\},\] 
and $\widetilde{E}_{\mu}$ is defined by 
\[\widetilde{E}_{\mu}=\begin{cases} \det(w')E_{\{\mu\}-\rho_c}; &\ \mu \ \text{is}\ \mathfrak{k}\text{-regular},\  w'\in W(\mathfrak{k},\mathfrak{t})\ \text{s.t.}\ w'\mu\ \text{is}\ \Delta^+(\mathfrak{k},\mathfrak{t})\text{-dominant},\\ 0; & \ \text{otherwise}.\end{cases}
\]
We mention that such result not only holds for weakly fair range but also for mediocre range, see definition of mediocre range in \cite[Definition 3.4]{Trapa01}. By \cite[Theorem 5.11] {DongWong21}, In the case of $A_\mathfrak{q}(\lambda)$ modules of $U(p,q)$, there is a necessary and sufficient condition to determine the non-vanishing of the Dirac index.

\begin{theorem}\cite[Lemma 5.6]{DongWong21}\label{thm.dirac.index.nonzero}
    Let $X=A_\mathfrak{q}(\lambda)$ be a module of $U(p,q)$, where the Levi subalgebra $\mathfrak l_0$ of $\mathfrak q$ is $\mathfrak u(p_1,q_1)\oplus \cdots \oplus \mathfrak{u}(p_r,q_r)$ and
$$\lambda=(\underbrace{\lambda_1,\cdots,\lambda_1}_{p_1+q_1};\cdots;\underbrace{\lambda_r,\cdots,\lambda_r}_{p_r+q_r}).$$
Write the infinitesimal character $\Lambda$ of $A_\mathfrak{q}(\lambda)$ as
$$\Lambda=(\nu^{(1)}_1,\cdots,\nu^{(1)}_{p_1+q_1};\cdots;\nu^{(r)}_1,\cdots,\nu^{(r)}_{p_r+q_r})=\lambda+\rho.$$
Define $R_{ij}={\rm Card}\{ \nu^{(i)}_1,\cdots,\nu^{(i)}_{p_i+q_i} \} \cap \{ \nu^{(j)}_1,\cdots,\nu^{(j)}_{p_j+q_j} \}$. Then $X$ has nonzero Dirac index if and only if the following set of equations has non-negative integer solutions $(a_{ij},b_{ij})$.
\begin{equation}
    \begin{cases}
a_{ij}+b_{ij}=R_{ij},\\
 \sum_{s>m}a_{ms}+\sum_{t<m} b_{tm}\leqslant p_m,\\
 \sum_{t<m}a_{tm}+\sum_{s>m} b_{ms}\leqslant q_m.
    \end{cases}
\end{equation}
\end{theorem}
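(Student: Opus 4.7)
The plan is to exploit the explicit formula
\[\mathrm{DI}\big(A_{\mathfrak{q}}(\lambda)\big)=\sum_{w\in W^1(\mathfrak{l},\mathfrak{t})}\det(w)\widetilde{E}_{w(\lambda+\rho)}\]
and turn the nonvanishing problem into a combinatorial matching problem on the blocks of $\mathfrak{l}$. Since $\mathfrak{l}_0=\bigoplus_{i=1}^{r}\mathfrak{u}(p_i,q_i)$, one has $W(\mathfrak{l},\mathfrak{t})=\prod_{i}(S_{p_i}\times S_{q_i})$, and each $w\in W^1(\mathfrak{l},\mathfrak{t})$ amounts to prescribing, inside each block $i$, how the $p_i+q_i$ entries of $\lambda+\rho$ from that block (automatically distinct within the block, since $\lambda_i$ is constant while $\rho$ is strictly decreasing on each half) are distributed between the compact and noncompact coordinates of the ambient $\widetilde{K}$-weight, subject to $w\rho$ being $\Delta^+(\mathfrak{k},\mathfrak{t})$-dominant.

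Next I would analyze when the summand $\widetilde{E}_{w(\lambda+\rho)}$ is nonzero. By definition this needs $w(\lambda+\rho)$ to be $\mathfrak{k}$-regular, i.e.\ its $p$-part and $q$-part each have pairwise distinct entries. Because each block alone already contributes distinct entries to each side, regularity can fail only when two different blocks $i$ and $j$ contribute the same value to the same side, and the number of values shared between blocks $i$ and $j$ is exactly $R_{ij}$. Each shared value must be placed on opposite sides in the two blocks. Introducing $a_{ij}$ for the number of shared values from blocks $i,j$ sent to the compact side from block $i$ (and hence to the noncompact side from block $j$) and $b_{ij}$ for the opposite choice, regularity forces $a_{ij}+b_{ij}=R_{ij}$. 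Moreover, the block-$m$ capacity constraints read $\sum_{s>m}a_{ms}+\sum_{t<m}b_{tm}\leqslant p_m$ and $\sum_{t<m}a_{tm}+\sum_{s>m}b_{ms}\leqslant q_m$, because the left-hand sides count exactly the shared values landing on the compact, respectively noncompact, side of block $m$, with the remaining slots filled by unshared entries. This yields a bijection between nonvanishing summands in the Dirac index formula and non-negative integer solutions $(a_{ij},b_{ij})$ of the stated system.

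To pass from existence of a solution to nonvanishing of $\mathrm{DI}(X)$, I would group the summands by their $\widetilde{K}$-type $\widetilde{E}_{\{w(\lambda+\rho)\}-\rho_c}$. Two elements of $W^1(\mathfrak{l},\mathfrak{t})$ producing the same $\widetilde{K}$-type differ only by a $W(\mathfrak{k},\mathfrak{t})$-element permuting entries within the compact and noncompact halves, and the resulting sign is already absorbed into the definition of $\widetilde{E}$ via $\det(w')$. Distinct solutions $(a_{ij},b_{ij})$ produce ambient weights that differ in at least one coordinate, so the corresponding $\widetilde{K}$-types are genuinely distinct and cannot cross-cancel. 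Consequently $\mathrm{DI}(X)\neq 0$ is equivalent to the existence of some valid solution $(a_{ij},b_{ij})$.

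The main obstacle will be the sign bookkeeping in the last step: a priori different elements $w\in W^1(\mathfrak{l},\mathfrak{t})$ producing the same $\widetilde{K}$-type could contribute with opposite signs and cancel. Ruling this out requires showing that, once the combinatorial data $(a_{ij},b_{ij})$ are fixed, the residual freedom in $w$ really lies entirely in $W(\mathfrak{k},\mathfrak{t})$, and that the arithmetic-progression structure of $\lambda+\rho$ on each block prevents accidental coincidences between the $\widetilde{K}$-types attached to different solutions. This rigidity is special to the block-constant nature of $\lambda$ for $A_\mathfrak{q}(\lambda)$-modules of $U(p,q)$, and is precisely where the hypothesis on $\lambda$ enters.
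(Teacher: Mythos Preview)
This theorem is not proved in the present paper at all: it is quoted verbatim from \cite[Lemma 5.6]{DongWong21} and then applied in the proof of Theorem~\ref{thm.scattered.reps}. There is therefore no in-paper argument to compare your sketch against; the paper simply invokes the result as a black box.

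That said, your outline is headed in the right direction for how the result is actually established in \cite{DongWong21}, but a couple of points need correction. First, $W(\mathfrak{l},\mathfrak{t})=\prod_i S_{p_i+q_i}$, not $\prod_i(S_{p_i}\times S_{q_i})$; the latter is $W(\mathfrak{l}\cap\mathfrak{k},\mathfrak{t})$. What you really want is that $W^1(\mathfrak{l},\mathfrak{t})$ is a set of coset representatives for $W(\mathfrak{l}\cap\mathfrak{k},\mathfrak{t})\backslash W(\mathfrak{l},\mathfrak{t})$, and each such representative corresponds to choosing, in each block $i$, which $p_i$ of the $p_i+q_i$ consecutive entries go to the $U(p)$-side and which $q_i$ go to the $U(q)$-side. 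Second, the claimed \emph{bijection} between nonzero summands and solutions $(a_{ij},b_{ij})$ is not literally a bijection: the tuple $(a_{ij},b_{ij})$ only records how many of the overlapping values land on each side, not which ones, and there is residual freedom in placing the non-overlapping entries. The honest statement is that each solution corresponds to a nonempty family of $w$'s whose contributions add up with a common sign, and that families coming from distinct solutions produce distinct $\widetilde{K}$-types. Making this precise---in particular, ruling out cancellation between different $w$'s attached to the same solution---is exactly the content of the argument in \cite{DongWong21}, and you have correctly flagged it as the crux.
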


\begin{theorem}\label{thm.scattered.reps}
    After tensoring a power of the determinant representation, all FS-scattered representations are those $A_\mathfrak{q}(\lambda)$ listed in proposition \ref{prop.fund.1}, \ref{prop.fund.2}, \ref{prop.fund.3}, \ref{prop.fund.4} and example \ref{eg.u22}. For each $n$, there are $\frac{n^2+n+2}{2}$ many such FS-scattered representations.
    \begin{proof}
 An FS-scattered representation is a fully supported unitary representation with nonzero Dirac cohomology. We have summarized all fully supported unitary representations that satisfy the H.P.-condition in Proposition \ref{prop.fund.1}, \ref{prop.fund.2}, \ref{prop.fund.3} and \ref{prop.fund.4}. In fact, all of them have nonzero Dirac index, hence nonzero Dirac cohomology.

Using Theorem \ref{thm.dirac.index.nonzero}, we check briefly the case in Proposition \ref{prop.fund.1}. 
In this case, $R_{12}=2$ $p_1,~p_2$ are both greater than 2, and $q_1=q_2=1$. Then $a_{12}=b_{12}=1$ is the unique non-negative integer solution.

We omit the similar argument of the cases in Proposition \ref{prop.fund.2}, \ref{prop.fund.3} and \ref{prop.fund.4}.

The number of different combinatorial $\theta$-stable data are counted in each proposition listed above. In total there are $\frac{n^2+n+2}{2}$ different data.
    \end{proof}
\end{theorem}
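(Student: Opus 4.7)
The overall plan is to show two things: first, that the lists in Propositions \ref{prop.fund.1}--\ref{prop.fund.4} (together with Example \ref{eg.u22}) exhaust all FS-scattered representations; and second, that each listed candidate actually has nonzero Dirac cohomology. The first direction is essentially the content of the preceding subsection: an FS-scattered representation is by definition fully supported, unitary, and has nonzero Dirac cohomology. Lemma \ref{lemma.H.P.condition} forces the infinitesimal character to be integral (in $\rho + \mathbb{Z}^{n+2}$), Lemma \ref{Lambda.difference.one} forces consecutive entries to differ by $0$ or $1$, and Theorem \ref{thm.weakly.good.ind.unitary} combined with Lemma \ref{fully.supported.condition} forces the combinatorial $\theta$-stable datum to be a single fundamental datum of type (a) or (b). The exhaustive case analysis according to whether the datum contains a parallelogram, whether $(\nu_\Theta,\nu_\Phi)$ lies inside or on the boundary of the fundamental rectangle, or outside it, is exactly what Propositions \ref{prop.fund.1}--\ref{prop.fund.4} and Example \ref{eg.u22} carry out, so the first direction reduces to citing those propositions.

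The second direction, that every candidate has nonzero Dirac cohomology, is the heart of the proof. The tactic is to show the stronger statement that the Dirac index is nonzero (cancellation does not occur), using Theorem \ref{thm.dirac.index.nonzero}. Each candidate has already been identified as an $A_\mathfrak{q}(\lambda)$-module with explicit Levi data, so one only needs to solve the integer system in Theorem \ref{thm.dirac.index.nonzero} for $(a_{ij},b_{ij})$. The cases of Propositions \ref{prop.fund.1}, \ref{prop.fund.2}, \ref{prop.fund.4} have a Levi with two (or one) factors, so there is a single pair $(a_{12}, b_{12})$ with $a_{12}+b_{12}=R_{12}$, and one checks that the inequalities on $p_m, q_m$ admit the solution $a_{12}=b_{12}=R_{12}/2$ (or its integer variants) --- the sample calculation given in the statement for Proposition \ref{prop.fund.1}, where $R_{12}=2$ and $q_1=q_2=1$, is the prototype. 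For Proposition \ref{prop.fund.3}, where the Levi has three factors $\mathfrak{u}(n-1,1)\oplus\mathfrak{u}(0,1)\oplus\mathfrak{u}(1,0)$ or its mirror, one has three pairs of unknowns $(a_{ij},b_{ij})$ with $i<j\in\{1,2,3\}$, and the $R_{ij}$'s are computed by counting coincidences among the three segments of $\Lambda$; the single-entry Levi factors force stringent constraints on $a_{ij},b_{ij}$ but the system still admits a solution.

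The main obstacle I expect is Proposition \ref{prop.fund.3}: the Levi has three factors, two of which have rank $1$, so the inequalities $\sum a_{ms}+\sum b_{tm}\leqslant p_m$ and the analogous ones for $q_m$ become genuinely restrictive, and one must verify existence of non-negative integer solutions while $R_{ij}$ ranges over the parametrization of Proposition \ref{prop.fund.3}(2). The saving grace is that the rank-one factors have $p+q=1$, so each $R_{1j}$ and $R_{2j}$ with a rank-one factor is at most $1$, forcing $a_{ij},b_{ij}\in\{0,1\}$, and the remaining inequality for the large factor $\mathfrak{u}(n-1,1)$ is slack for $n\geqslant 3$. A parallel calculation handles the $(2,2)$/$(3,2)$ block variant in Proposition \ref{prop.fund.4}.

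Finally, the enumeration $\tfrac{n^2+n+2}{2}$ is obtained by summing the counts furnished in each proposition: $2$ (Proposition \ref{prop.fund.1}) $+$ $(n-1)$ (Proposition \ref{prop.fund.2}) $+$ $\tfrac{(n-2)(n-3)}{2}$ (Proposition \ref{prop.fund.3}) $+$ $(2n-3)$ (Proposition \ref{prop.fund.4}), which after a one-line arithmetic simplification gives $\tfrac{n^2+n+2}{2}$. One should also verify the small-$n$ boundary: for $n=2$, the formula yields $4$, matching Example \ref{eg.u22}, so the statement is consistent with the $U(2,2)$ case that was treated separately.
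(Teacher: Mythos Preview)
Your proposal is correct and follows essentially the same approach as the paper: reduce exhaustion to Propositions \ref{prop.fund.1}--\ref{prop.fund.4} and Example \ref{eg.u22}, verify nonzero Dirac cohomology via nonzero Dirac index using Theorem \ref{thm.dirac.index.nonzero}, and sum the case counts. You supply more detail than the paper does (the three-factor Levi analysis for Proposition \ref{prop.fund.3}, the explicit arithmetic $2+(n-1)+\tfrac{(n-2)(n-3)}{2}+(2n-3)=\tfrac{n^2+n+2}{2}$, and the $n=2$ consistency check), but the strategy is identical.
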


For all string representations in the Dirac series of $U(n,2)$, they are all obtained by cohomological induction in the weakly good range from FS-scattered representations of $U(p,q)$ in smaller rank with $q\leqslant 2$.

\begin{theorem}\label{thm.string.reps}
In the Dirac series of $U(n,2)$, all FS-scattered representations are listed in theorem \ref{thm.scattered.reps}. For a string representation $X$ in the Dirac series of $U(n,2)$, there exists a proper $\theta$-stable parabolic $\mathfrak q=\mathfrak l\oplus\mathfrak u$ and a $(\mathfrak l, L\cap K)$-module $\pi$, such that
\begin{enumerate}
    \item $X$ is cohomologically induced from $\pi$ in the weakly good range;
    \item $\mathfrak l_0=\mathfrak u(n_1,r_1)\oplus\cdots\oplus \mathfrak u(n_m,r_m)$;
    \item $\pi$ restricted on each Levi factor $\mathfrak u(n_i,r_i)$, denoted by $\pi_i$, has nonzero Dirac cohomology;
    \item if $r_i=2$ and $n_i>1$, $\pi_i$ is then a fully supported representation listed in theorem \ref{thm.scattered.reps} where the $n_i$ is less than $n$; otherwise, $\pi_i$ is a finite dimensional representation.
\end{enumerate}
\begin{proof}
Let $X$ be a representation of $U(n,2)$ in the Dirac series and suppose its combinatorial $\theta$-stable datum $\mathcal D$ is $(\lambda_a(\delta),\nu)$ where $\delta$ is its lowest $K$-type. If $X$ is fully supported, it is easy to see that $\mathcal D$ has to be fundamental. Hence, it must be one of the cases listed in theorem \ref{thm.scattered.reps}.

Now, suppose $X$ is in the string representations, which means it has to be cohomologically induced from some $\pi$ of a proper $\theta$-stable parabolic $\mathfrak q$. Partition the datum $\mathcal D$ in a way such that each component $\mathcal D_i$ is one of the following cases:
\begin{itemize}
    \item a collection of consecutive blocks of size $(1,0)$;
    \item a collection of consecutive blocks of size $(0,1)$;
    \item a datum corresponding to a trivial representation of $U(p,1)$ with $(p<n)$ or $U(1,2)$;
    \item a datum corresponding to a fully supported representation of $U(p,2)$ with $(p<n)$.
\end{itemize}
In fact, we can always partition the datum into fundamental pieces. For the sake of convenience, we may put consecutive blocks of size $(1,0)$ (resp. $(0,1)$) together. Locally, they correspond to finite-dimensional irreducible representations of compact unitary groups. Moreover, after a quick observation, we may assume the rest pieces of the partitions correspond to fully supported representations. Theorem \ref{thm.weakly.good.ind.unitary} implies that each datum $\mathcal D_i$ corresponds to a unitary representation $\pi_i$. So, if $\mathcal D_i$ corresponds to a representation of $U(p,1)$ or $U(1,2)$, it has to be the trivial representation because it is unitary and fully supported.

A key fact is that the corresponding segments $[e_i,b_i]$ can be made to satisfy $b_i\geqslant e_{i+1}$. Then lemma \ref{lemma.fully.supported.condition} implies that the cohomological induction constructed from the partition $\mathcal D=\coprod_i \mathcal D_i$ is in the weakly good range. Because $X$ is in the Dirac series, corollary \ref{cor.HD.induction} forces that $\pi$ has nonzero Dirac cohomology. As a consequence, each component $\pi_i$ has to have nonzero Dirac cohomology. Therefore, if $\mathcal D_i$ corresponds to a representation of $U(p,2)$, it has to be an FS-scattered representation of $U(p,2)$.
    \end{proof}
\end{theorem}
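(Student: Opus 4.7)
The plan is to argue in two stages, following the dichotomy FS-scattered versus string. For the FS-scattered part, the claim is immediate from what has been established: an FS-scattered representation is by definition a fully supported unitary representation with nonzero Dirac cohomology, and in Theorem \ref{thm.scattered.reps} we have already enumerated every fully supported unitary $(\mathfrak{g},K)$-module satisfying the H.P.-condition (Lemma \ref{lemma.H.P.condition}) and the integrality/difference-one constraint (Lemma \ref{Lambda.difference.one}), and verified via Theorem \ref{thm.dirac.index.nonzero} that each of them has nonvanishing Dirac index and hence nonvanishing Dirac cohomology. So the first sentence of the theorem is simply a restatement.

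For the second part, let $X$ be a string representation in the Dirac series, with combinatorial $\theta$-stable datum $\mathcal{D}=(\lambda_a(\delta),\nu)$. The strategy is to produce an explicit partition $\mathcal{D}=\coprod_i \mathcal{D}_i$ of $\mathcal{D}$ whose associated $\theta$-stable parabolic $\mathfrak{q}=\mathfrak{l}\oplus\mathfrak{u}$ is proper and whose induction lands in the weakly good range. First I would group the datum into its maximal fundamental sub-data; since $X$ is not fully supported, at least two such groups arise, so the resulting $\mathfrak{q}$ is proper. I would then coarsen this partition by amalgamating any adjacent $(1,0)$-blocks into a single piece (and similarly for $(0,1)$-blocks), so that each resulting component $\mathcal{D}_i$ is one of the four types listed in the statement: a run of $(1,0)$-blocks, a run of $(0,1)$-blocks, the datum of a trivial representation of $U(p,1)$ or $U(1,2)$, or the datum of a fully supported representation of $U(p,2)$ with $p<n$.

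Next I would verify that the segments $[e_i,b_i]$ of the partition satisfy the chain condition $e_1\geqslant b_1\geqslant e_2\geqslant b_2\geqslant\cdots\geqslant e_k\geqslant b_k$ of Lemma \ref{lemma.fully.supported.condition}, which puts the cohomological induction in the weakly good range. This reduces to checking that at each boundary between components the rightmost entry of $\Lambda_i$ is at least the leftmost entry of $\Lambda_{i+1}$; by the way fundamental pieces are cut (neighboring contents differing by at most one) together with the integrality from Lemma \ref{Lambda.difference.one}, this can be arranged. Applying Corollary \ref{cor.HD.induction} then forces the inducing module $\pi=\bigotimes_i \pi_i$ to have nonzero Dirac cohomology, which is equivalent to each factor $\pi_i$ having nonzero Dirac cohomology (the Künneth-type splitting of Dirac cohomology across a direct sum of reductive Lie algebras). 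In each of the four cases we then identify $\pi_i$: runs of $(1,0)$ or $(0,1)$ blocks give finite-dimensional $U(p_i)$-representations (trivially Dirac-cohomological); a $U(p,1)$ or $U(1,2)$ component is fully supported and unitary so, by Example \ref{eg.U(n1).fundamental.cases} and the $U(1,2)$ analogue, must be the trivial representation; and a genuine $U(p,2)$ component with $p>1$ must be fully supported and hence appear in Theorem \ref{thm.scattered.reps}.

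The main obstacle I anticipate is the verification of the weakly-good chain inequality $b_i\geqslant e_{i+1}$ across every boundary of the partition. The danger is that a fully supported $U(p,2)$-subdatum may have its extremal $\nu$-coordinate push $e_i$ or $b_i$ beyond what the neighboring $(1,0)$ or $(0,1)$ contents allow, which would break the weakly-good hypothesis and prevent invoking Corollary \ref{cor.HD.induction}. The resolution will be to use the explicit descriptions in Propositions \ref{prop.fund.1}--\ref{prop.fund.4}: in each fully supported $U(p,2)$ piece the $\nu$-coordinates are tightly pinned (for instance $\nu_\Phi=l+1+r/2$ in Proposition \ref{prop.fund.1} and $\Phi\pm\nu_\Phi$ land at $\pm(\gamma+1)$ in Proposition \ref{prop.fund.3}), so the segment $[e_i,b_i]$ is controlled in terms of the block contents themselves, and one checks directly that the flanking $(1,0)$ or $(0,1)$ runs supply enough slack for $b_i\geqslant e_{i+1}$. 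Once this case analysis is completed, the four clauses of the conclusion follow.
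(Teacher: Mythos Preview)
Your approach matches the paper's: partition the combinatorial datum into pieces of the four listed types, invoke Lemma \ref{lemma.fully.supported.condition} for the weakly good range, and then apply Corollary \ref{cor.HD.induction} to push nonzero Dirac cohomology down to each factor. Two points deserve correction or sharpening.

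First, the sentence ``since $X$ is not fully supported, at least two such [maximal fundamental] groups arise'' is false. A datum can be fundamental yet fail to be fully supported: take any type (a) datum with $\nu_\Theta,\nu_\Phi$ small enough that condition (2) or (3) of Lemma \ref{fully.supported.condition} fails. In that situation your ``partition into maximal fundamental sub-data, then coarsen'' yields a single piece and no proper parabolic at all. The paper handles this not by cutting along fundamental boundaries but by declaring that one refines until every non-compact piece is fully supported; the existence of such a refinement is exactly Lemma \ref{lemma.fully.supported.condition} applied recursively to any piece that is not yet fully supported. You should replace your initial step by this recursive refinement.

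Second, on the segment inequality $b_i\geqslant e_{i+1}$: you correctly locate the danger in the fully supported $U(p,2)$ pieces, but the resolution is not that ``the flanking runs supply enough slack'' in the sense of a bound. The paper's fix (given in the Remark following the proof) is a re-partitioning: the only problematic configuration comes from Proposition \ref{prop.fund.4}, where one $\nu$-arrow overshoots the piece by exactly one unit, giving $e_{i+1}=b_i+1$. One then transfers a single $(1,0)$-block from the adjacent compact run $\mathcal D_i$ into the $U(p,2)$ piece $\mathcal D_{i+1}$; after this shift the new segments satisfy $b_i'= e_{i+1}'$ and the weakly good condition holds. The paper notes explicitly that this trick is special to $U(n,2)$ and does not survive to general $U(p,q)$.
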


\noindent\textbf{Remark.} In the proof of the above theorem, a tricky part is partitioning the datum into unions of size $(1,0)$ blocks, unions of size $(0,1)$ blocks and fully supported components as $\mathcal D=\coprod_i \mathcal D_i$. And we say the corresponding segments $[e_i,b_i]$ satisfy $b_i\geqslant e_{i+1}$. This trick can work out because our group is $U(n,2)$. If one goes back to the fundamental data which correspond to fully supported representations, the only case one should worry about is in proposition \ref{prop.fund.4}. To be precise, the partition could locally look like the left picture where the two arrows are included in $\mathcal D_{i+1}$. This means $\mathcal D_{i+1}$ corresponds to a fully supported representation of some $U(p,2)$. Because the group we have is $U(n,2)$, the rest of blocks outside $\mathcal D_{i+1}$ in the whole datum have to be of size $(1,0)$. We see that in the left picture $e_i+1=b_{i+1}$, which is not what we want. But we can slightly adjust the partition as shown in the right picture. In the new partition, we still have the two arrows included in $\mathcal D^\prime_{i+1}$. But we move one block from $\mathcal D_i$ to $\mathcal D^\prime_{i+1}$. Then in the new partition, the segments have $e^\prime_i=b_{i+1}^\prime$. The issue is fixed.

\begin{center}
    \begin{tikzpicture}

\foreach \y in {1, 0.2, -0.6}
    \draw (\y,1)
    -- (\y+0.2,0)
    -- (\y+0.4,0)
    -- (\y+0.6,1)
    -- cycle;

\foreach \y in {-1.5, 2.5}
    \draw (\y,0.5) node {$\cdots$};

\draw decorate [decoration={brace, amplitude=10pt, mirror}]{(1.3,-0.2) -- (3.2,-0.2)};
\draw decorate [decoration={brace, amplitude=10pt, mirror}]{(-2,-0.2) -- (0.6,-0.2)};

\draw (2.4,-0.8) node {$\mathcal D_{i+1}$};
\draw (-0.7,-0.8) node {$\mathcal D_{i}$};

\draw[arrows = {-Stealth[]}] (3.2,1.2) to [out=135,in=45] (0.6,1.2);

\draw[arrows = {-Stealth[]}] (3.2, 2) to [out=180,in=45] (0,1.2);

\foreach \y in {7, 6.2, 5.4}
    \draw (\y,1)
    -- (\y+0.2,0)
    -- (\y+0.4,0)
    -- (\y+0.6,1)
    -- cycle;

\foreach \y in {4.5, 8.5}
    \draw (\y,0.5) node {$\cdots$};

\draw decorate [decoration={brace, amplitude=10pt, mirror}]{(6.5,-0.2) -- (9.2,-0.2)};
\draw decorate [decoration={brace, amplitude=10pt, mirror}]{(4,-0.2) -- (5.8,-0.2)};

\draw (8,-0.8) node {$\mathcal D_{i+1}^\prime$};
\draw (5,-0.8) node {$\mathcal D_{i}^\prime$};

\draw[arrows = {-Stealth[]}] (9.2,1.2) to [out=135,in=45] (6.6,1.2);

\draw[arrows = {-Stealth[]}] (9.2, 2) to [out=180,in=45] (6,1.2);

\end{tikzpicture}
\end{center}
-0

But in general cases when the group is $U(p,q)$, the fundamental data are more complicated. This trick cannot be reapplied in the general case.

\section{Dirac cohomology and spin lowest $K$-type}

To compute the Dirac cohomology of a unitary $(\mathfrak{g},K)$-module $X$ effectively, \cite{Dong13} defines \emph{the spin lowest $K$-type} of $X$.  Let us briefly summarize some results of \cite{Dong13}.

Let $W^1:=\{w\in W(\mathfrak{g},\mathfrak{t})|w\rho~{\rm is }~ \Delta^{+}(\mathfrak{k},\mathfrak{t}){\rm-dominant}\}$, and let 
\[\{\rho_n^{(j)}|j=1,\dots,m\}:=\{w\rho-\rho_c|w\in W^1\}.\] 

Then a $K$-type $\sigma$ of a unitary $(\mathfrak{g},K)$-module $X$ with infinitesimal character $\Lambda$ is called spin lowest $K$-type if 
\begin{equation}\label{spin_lowest}w\Lambda-\rho_c=\{\sigma-\rho_n^{(j)}\}\end{equation}
for some $w\in W(\mathfrak{g},\mathfrak{t})$ and some $\rho_n^{(j)}$, where $\{\bullet\}$ denotes the $\Delta^{+}(\mathfrak{k},\mathfrak{t})$-dominant element to which the element inside the bracket is conjugated via $W(\mathfrak{k},\mathfrak{t})$. Moreover, the Dirac cohomology of $X$ 
\[H_D(X)\simeq \mathop{\bigoplus}\limits_{\tiny \begin{array}{c}\sigma\in \{\text{spin lowest}\ K\text{-types}\}\\ \rho_n^{(j)}\ \text{satisfies}\ \eqref{spin_lowest}\end{array}} 2^{[\frac{l_0}{2}]}E_{\{\sigma-\rho_n^{(j)}\}}
\]
as $\widetilde{K}$-module, where $l_0=\mathrm{rank}(\mathfrak{g})-\mathrm{rank}(\mathfrak{k})$, $E_{\bullet}$ denotes the finite dimensional irreducible  $\widetilde{K}$-module with highest weight $\bullet$, and the sum over $\sigma$ counts the multiplicity of $\sigma$ in $X$. In the case that $G$ is of Hermitian symmetric type, $G$ and $K$ are of equal rank, $l_0=0$. 

From the Theorem \ref{thm.scattered.reps} and Theorem \ref{thm.string.reps}, one has obtained all the Dirac series of $U(n,2)$. We continue computing the Dirac cohomology of them explicitly. Roughly, we will first find out the possible $\widetilde{K}$-types which possibly show up in the Dirac cohomology, and then find the spin lowest $K$-types, from which one can get the Dirac cohomology.

By Theorem \ref{thm.scattered.reps}, all the scattered representations are $A_{\mathfrak{q}}(\lambda)$-modules. We will show the following results for $U(n,2)$.
\begin{theorem}
Given any FS-scattered representation $A_{\mathfrak{q}}(\lambda)$ of $U(n,2)$, then for each $w\in W^1(\mathfrak{l},\mathfrak{t})$ such that $w(\lambda+\rho)$ is $\mathfrak{k}$-regular, $E_{\{w(\lambda+\rho)\}-\rho_c}$ will occur in the Dirac cohomology of $A_{\mathfrak{q}}(\lambda)$. Moreover, there is no cancellation between the even part and the odd part of Dirac cohomology. 

Given a spin lowest $K$-type $\sigma$, there is a subset $A_{\sigma}$ of $\{\rho_n^{(j)}|j=1,\dots,m\}$ such that $\{\sigma-\rho_n^{(i)}\}$ contributes to the Dirac cohomology $\forall \rho_n^{(i)}\in A_{\sigma}$. Then given any different spin lowest $K$-types $\sigma_1$ and $\sigma_2$, one has $A_{\sigma_1}\cap A_{\sigma_2}=\emptyset$. And 
\[\bigsqcup_{\sigma\in \ \{\text{spin lowest}\ K\text{-types}\}}A_{\sigma}\subset \{w\rho-\rho_c|w\in W^1(\mathfrak{l},\mathfrak{t})\}.\]    
\end{theorem}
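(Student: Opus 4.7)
The plan is to compute $H_D$ for each FS-scattered $A_\mathfrak{q}(\lambda)$ from Theorem \ref{thm.scattered.reps} via the Dirac index formula
\[
\mathrm{DI}(A_\mathfrak{q}(\lambda)) = \sum_{w \in W^1(\mathfrak{l},\mathfrak{t})} \det(w)\,\widetilde{E}_{w(\lambda+\rho)}
\]
recalled at the start of Section 5, and to match the result against the general decomposition $H_D(X) = \bigoplus_{\sigma,\,\rho_n^{(j)}} E_{\{\sigma - \rho_n^{(j)}\}}$ of \cite{Dong13}. The heart of the argument is to show that no two nonzero terms in the Dirac index sum can cancel each other. Once this is established, every $w \in W^1(\mathfrak{l},\mathfrak{t})$ with $w(\lambda+\rho)$ being $\mathfrak{k}$-regular contributes a nonzero summand $E_{\{w(\lambda+\rho)\}-\rho_c}$ to $H_D(A_\mathfrak{q}(\lambda))$, which yields both the first assertion and the non-cancellation claim at the same time.

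For the no-cancellation step I would proceed case by case through the four families of Propositions \ref{prop.fund.1}--\ref{prop.fund.4} (plus the $n=2$ data in Example \ref{eg.u22}). In each family the Levi $\mathfrak{l}_0$ has at most three summands, $\lambda$ is given explicitly, and $W^1(\mathfrak{l},\mathfrak{t})$ is a short list of coset representatives indexed by which slots within each $\mathfrak{u}(p_i,q_i)$-block are sent to the compact side. The claim to verify is that $w \mapsto \{w(\lambda+\rho)\}$ is injective on those $w \in W^1(\mathfrak{l},\mathfrak{t})$ with $w(\lambda+\rho)$ being $\mathfrak{k}$-regular. Granting injectivity, the sign $\det(w)\det(w')$ attached to $E_{\{w(\lambda+\rho)\}-\rho_c}$ is intrinsic to the $\mathfrak{k}$-dominant image, because whenever two coset representatives produce the same dominant weight their composites with the relevant $W(\mathfrak{k},\mathfrak{t})$-elements must stabilize $\lambda+\rho$, and this stabilizer is trivial in each case (or is absorbed into $W(\mathfrak{k},\mathfrak{t})$ without affecting the sign). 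Cancellation is therefore ruled out.

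With non-cancellation in hand, the spin lowest $K$-type part of the theorem is a direct bookkeeping consequence of (\ref{spin_lowest}). Every $\widetilde{K}$-type appearing in $H_D(A_\mathfrak{q}(\lambda))$ has the form $E_{\{w(\lambda+\rho)\}-\rho_c}$ for a unique $w \in W^1(\mathfrak{l},\mathfrak{t})$; rewriting $\{w(\lambda+\rho)\}-\rho_c$ as $\{\sigma - \rho_n^{(j)}\}$ via (\ref{spin_lowest}) singles out the spin lowest $K$-type $\sigma$ that carries this $w$-contribution, together with the element $\rho_n^{(j)} = w\rho - \rho_c$. Defining $A_\sigma$ as the set of $\rho_n^{(j)}$ paired with $\sigma$ by this identification simultaneously gives the disjointness $A_{\sigma_1} \cap A_{\sigma_2} = \emptyset$ for distinct spin lowest $K$-types and the inclusion $\bigsqcup_\sigma A_\sigma \subset \{w\rho - \rho_c : w \in W^1(\mathfrak{l},\mathfrak{t})\}$, because the whole image of the identification already lies in this smaller set.

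The main obstacle should be the injectivity check for the families in Propositions \ref{prop.fund.3} and \ref{prop.fund.4}, where $\lambda+\rho$ has repeated entries coming from the trivial factors and different coset representatives of $W^1(\mathfrak{l},\mathfrak{t})$ could a priori collide after passing to $\mathfrak{k}$-dominant form; handling these requires an explicit stabilizer computation using the $\mathfrak{l}_0$ and $\lambda$ given there. Once this is verified, the remaining families reduce to shorter combinatorial calculations.
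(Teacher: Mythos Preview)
Your plan has two genuine gaps.

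First, the injectivity of $w\mapsto\{w(\lambda+\rho)\}$ on $W^1(\mathfrak{l},\mathfrak{t})$ fails in the family of Proposition~\ref{prop.fund.2}. There $\mathfrak{l}_0\cong\mathfrak{u}(p,1)\oplus\mathfrak{u}(n-p,1)$ with $p,n-p>0$, and $\lambda+\rho$ contains two copies of $1$ and two copies of $0$ (one from each block). Exactly two elements of $W^1(\mathfrak{l},\mathfrak{t})$ produce a $\mathfrak{k}$-regular image, and they give the \emph{same} $\{w(\lambda+\rho)\}$; the corresponding $\widetilde{K}$-type occurs in $H_D$ with multiplicity two. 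So the strategy ``verify injectivity, hence no cancellation'' cannot succeed as stated. You would instead have to check, in each non-injective case, that the colliding terms carry the same sign $\det(w)\det(w')$, and the stabilizer of $\lambda+\rho$ here is $S_2\times S_2$, so this is not automatic.

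Second, and more seriously, even if you establish that the Dirac index sum has no internal cancellation, you only obtain a lower bound $|H_D^+|+|H_D^-|\geqslant\sum_\mu|a_\mu-b_\mu|$. To conclude that there is no cancellation between $H_D^+$ and $H_D^-$ you need the matching upper bound on $|H_D|$, and your argument does not supply one. Your third paragraph asserts that every $\widetilde{K}$-type in $H_D$ arises as $E_{\{w(\lambda+\rho)\}-\rho_c}$ for a unique $w\in W^1(\mathfrak{l},\mathfrak{t})$ and that the associated $\rho_n^{(j)}$ equals $w\rho-\rho_c$, but a priori the spin lowest $K$-type description only gives $\rho_n^{(j)}=w'\rho-\rho_c$ for some $w'\in W^1$, not $W^1(\mathfrak{l},\mathfrak{t})$; the containment $\bigsqcup_\sigma A_\sigma\subset\{w\rho-\rho_c:w\in W^1(\mathfrak{l},\mathfrak{t})\}$ is part of what you are asked to prove, so invoking it here is circular. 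The paper closes this gap by going in the opposite direction: it uses the Blattner formula to list all $K$-types of each $A_\mathfrak{q}(\lambda)$ explicitly (they turn out to be multiplicity free), determines the spin lowest $K$-types and their compatible $\rho_n^{(j)}$ by hand, and thereby computes $H_D$ directly. The comparison with the Dirac index formula then yields both non-cancellation and the containment statement.
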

\begin{proof}
We will begin with listing the Dirac series, their spin lowest $K$-types and the corresponding $\rho_n^{(j)}$, which implies the statements about $A_\sigma$. In the case when $G=U(n,2)$, all of the Dirac series are $A_{\mathfrak q}(\lambda)$ modules. One can recognize an $A_{\mathfrak q}(\lambda)$ module from its Levi subalgebras $\mathfrak l_0$ of $\mathfrak q$ and $\lambda$.

\begin{itemize}
\item[(a)]  $\mathfrak l_0\cong \mathfrak u(0,1)\oplus \mathfrak u(n,1) $ and 
\[\lambda=(-1;\underbrace{1,1,\cdots,1}_{n+1}).\] The spin lowest $K$-types are
\[(0,\dots,0,-1\ |\ n,1),\ \text{and}\ (0,\dots,0\ |\ n-1,1).\]
The first corresponds to only one $\rho_n^{(j)}=(0,\dots,0,-1\ | \ \frac{n}{2},-\frac{n-2}{2})$, and the second corresponds to $n-1$ different $\rho_n^{(j)}$'s:
\[(0,\dots,0,\underbrace{-1,\dots,-1}_{m+1}\ | \  \frac{n}{2},-\frac{n-2-2m}{2}),\ 1\leqslant m\leqslant n-1.\]

\item[(b)] $\mathfrak l_0\cong \mathfrak u(n-2\Theta-1,1) \oplus \mathfrak u (n+2\Phi-1,1)$ with $2\Phi-2\Theta=-n+2, \Phi,\Theta\in \frac{1}{2}\mathbb{Z}$, and 
\[\lambda=(\underbrace{-1,-1,\cdots,-1}_{n-2\Theta};\underbrace{1,1,\cdots,1}_{n+2\Phi}).\] 
The spin lowest $K$-types are
\[(1,0,\dots,0\ |\ n-2\Theta-2,-n-2\Phi-1)\ \text{and}\ (0,\dots,0,-1\ |\ n-2\Theta-1,-n-2\Phi).\]
The first corresponds to $\rho_n^{(j)}=(1,0,\dots,0\ | \ \frac{n-2}{2},-\frac{n}{2})$, and the second corresponds to $\rho_n^{(j)}=(0,\dots,0,-1\ | \  \frac{n}{2},-\frac{n-2}{2})$.
 
\item[(c)] For $0=\Theta\geqslant \Phi$, $ 0<\nu_{\Phi}<\frac{n-1}{2}+\Phi$, $\mathfrak l_0\cong \mathfrak u(n-1,1)\oplus\mathfrak u(0,1) \oplus\mathfrak u(1,0)$, and 
\[\lambda=(\underbrace{-1,-1,\cdots,-1}_{n};\Phi+\nu_\Phi+\frac{n-1}{2};\Phi-\nu_\Phi+\frac{n+1}{2}).\]

For $\Theta\geqslant 0=\Phi$, $0<\nu_{\Theta}<\frac{n-1}{2}-\Theta$, $\mathfrak l_0\cong \mathfrak u(1,0) \oplus\mathfrak u(0,1) \oplus\mathfrak u(n-1,1)$, and 
\[\lambda=(\Theta+\nu_\Theta-\frac{n+1}{2};\Theta-\nu_\Theta-\frac{n-1}{2};\underbrace{1,1,\cdots,1}_{n}).\] 
We only consider the case  $0=\Theta\geqslant \Phi$: The spin lowest $K$ type is: \[(1,0,\dots,0,-1\ |\ \Phi-\nu_{\Phi}+\frac{n-1}{2},\Phi+\nu_{\Phi}-\frac{n-1}{2})\]
and the corresponding $\rho_n^{(j)}=(1,0,\dots,0,-1\ |\ \frac{n-2}{2},\frac{-n+2}{2})$. 

\item[(d)] For $\Theta\geqslant\Phi>0$,  $\mathfrak l_0\cong \mathfrak u(n-1,2) \oplus \mathfrak u (1,0)$ and 
\[\lambda=(\underbrace{0,0,\cdots,0}_{n+1}; 2\Theta+1).\]
For $0>\Theta\geqslant\Phi$, $\mathfrak l_0\cong \mathfrak u(1,0) \oplus \mathfrak u (n-1,2)$ and 
\[\lambda=(2\Phi-1;\underbrace{0,0,\cdots,0}_{n+1}).\] 
For $\Theta=\Phi=0$, it corresponds to the trivial module. 

We only consider the case $\Theta\geqslant\Phi>0$:
the spin lowest $K$-types are
\[(0,\dots,0\ |\ 2\Theta,1), \  \text{and}\ (0,\dots,0,-1\ |\ 2\Theta+1,1).\]
The first corresponds to 
$\rho_n^{(j)}=(\underbrace{0,\dots,0}_{s},-1,\dots,-1\ |\ \frac{n}{2},\frac{n-2s}{2})$, $0\leqslant s\leqslant 2\Theta-1$,
and the second corresponds to $\rho_n^{(j)}=(\underbrace{0,\dots,0}_{s},-1,\dots,-1\ |\ \frac{n}{2},\frac{n-2s}{2})$,  $2\Theta \leqslant s\leqslant n-1$. 
\end{itemize}
By the above results of the spin lowest $K$-types and the corresponding $\rho_n^{(j)}$, the theorem can be checked easily. Let us give the sketch of the proof.

In the case (a), (b), the computation of Dirac cohomology follows from the claim below.

Claim: Assume that $L=U(p,1)\times U(n-p,1)$, and $\lambda=(-1,\dots,-1;1,\dots,1)$, then the Dirac cohomology has unique $\widetilde{K}$-type with multiplicity two (resp. one) when $p, n-p>0$ (resp. $p=0$ or $n-p=0$), and there are two (resp. one) spin lowest $K$-type with multiplicity one. 

Let us show the claim:
When $p,n-p>0$, one has 
\[\lambda+\rho=(p,\dots,1,0;1,0,\dots,-n+p+1)+(\frac{n-2p-1}{2},\dots,\frac{n-2p-1}{2}).\]
Notice that $1,0$ appears twice in $\lambda+\rho$, and there exists only two $w\in W^1$ such that $w(\lambda+\rho)$ is $\Delta^+(\mathfrak{k},\mathfrak{t})$-regular, and they give the same $\{w(\lambda+\rho)\}$. So there is only one possible $\widetilde{K}$-type in Dirac cohomology:
\[\{w(\lambda+\rho)\}=(p,\dots,-n+p+1\ |\ 1,0)-\rho_c+(\frac{n-2p-1}{2},\dots,\frac{n-2p-1}{2}).\]
By the Blattner formula, with a similar argument as 
\cite[Proposition 5.3]{WongZhang24}, one can see that all the $K$-types of $A_{\mathfrak{q}}(\lambda)$ are of the form
\[(0,\dots,0\ |\ n-p-1,-p+1)+(a,0,\dots,0,-b\ |\ b,-a);\ a,b\in \mathbb{N}\]
with multiplicity one. A little computation  finds that there are two spin lowest $K$-types
\[a=1,b=0\ \text{or}\ a=0,b=1.\]
where the first one corresponds to $\rho_n^{(j)}=(1,0,\dots,0\ | \ \frac{p}{2}-1,-\frac{p}{2})$, and the second one corresponds to 
 $\rho_n^{(j)}=(0,\dots,0,-1\ | \ \frac{p}{2},-\frac{p}{2}+1)$,

When $p=0$, $n-p>1$, By the Blattner formula, all the $K$-types are of the form
\[(0,\dots,0,-s\ | \ s+n-1,1),\ s\in \mathbb{Z},\ s\geqslant 0,\]
and have multiplicity one.

One can find that here are spin lowest $K$-types 
\[(0,\dots,0,-1\ |\ n,1),\ \text{and}\ (0,\dots,0\ |\ n-1,1).\]
The first corresponds to $\rho_n^{(j)}=(0,\dots,0,-1\ | \ \frac{n}{2},-\frac{n-2}{2})$ and the second corresponds to
$\rho_n^{(j)}=(0,\dots,0,\underbrace{-1,\dots,-1}_{m+1}\ | \  \frac{n}{2},-\frac{n-2-2m}{2}),\ 1\leqslant m\leqslant n-1$. Now the claim follows.
 
In the case (c), assume that $w(\lambda+\rho)-\rho_c$ shows up in the Dirac cohomology. since $\lambda+\rho$ (which is the infinitesimal character of $A_{\mathfrak{q}}(\lambda)$) has two pair of the coordinates which are the same, there exists exactly one element  
$\{w(\lambda+\rho)\}$, $w\in W(\mathfrak{g},\mathfrak{t})$, which is $\Delta^+(\mathfrak{k},\mathfrak{t})$-regular and dominant. Moreover, $\{w(\lambda+\rho)\}-\rho_c=(0,\dots,0\ |\ \Phi+\nu_{\Phi},\Phi-\nu_{\Phi})$.
By the Blattner formula and \eqref{spin_lowest}, a little argument shows that the possible spin lowest K-types of $A_{\mathfrak{q}}(\lambda)$ are of the form
\[(a,0,\dots,0,\Phi-\nu_{\Phi}+\frac{n-3}{2}-b-c\ |\ b,\Phi+\nu_{\Phi}-\frac{n-3}{2}+c-a),\ a,b,c\in\mathbb{N}. \]
Now, one can easily determine the spin lowest $K$ type: \[(1,0,\dots,0,-1\ |\ \Phi-\nu_{\Phi}+\frac{n-1}{2},\Phi+\nu_{\Phi}-\frac{n-1}{2})\]
and the corresponding $\rho_n^{(j)}=(1,0,\dots,0,-1\ |\ \frac{n-2}{2},\frac{-n+2}{2})$.

In the case (d), by $w(\lambda+\rho)$ is $\mathfrak{k}$-regular, and $3\leqslant 2\Theta+1\leqslant n-1$, it is easy to see that all the $\mathfrak{k}$-regular element of $\{w(\lambda+\rho)\}$ are
\[\begin{aligned}& \rho_c+\big(\underbrace{1,\dots,1}_{r},\underbrace{0,\dots,0}_{n-r}\ |\ -\frac{n}{2}+2\Theta,\frac{n}{2}-r+1\big),\ \ n-2\Theta+1\leqslant r\leqslant n; \ \text{and}\\
&\rho_c+\big(\underbrace{1,\dots,1}_{r},\underbrace{0,\dots,0}_{n-r}\ |\ \frac{n}{2}-r, -\frac{n}{2}+2\Theta+1\big),\ \ 0\leqslant r\leqslant n-2\Theta-1.\end{aligned}\]
These are all the $K$-types which are possible to show up in the Dirac cohomology.

Let us show that they all  show up in the Dirac cohomology. By Blattner formula, a little bit of argument shows that
all the $K$-types of $A_{\mathfrak{q}}(\lambda)$ are of the form 
\[(0,\dots,0, 2\Theta-1-m\ |\ m+1,1),\ m\in \mathbb{Z},\ m\geqslant 2\Theta-1,\]
and all the $K$-types have multiplicity one.
Since the spin lowest $K$-type $\sigma$ satisfies $\{w(\lambda+\rho)\}-\rho_c=\{\sigma-\rho_n^{(j)}\}$ for some $\rho_n^{(j)}$.
Notice that $\rho_n^{(j)}=(1,\dots,1,0,\dots,0,-1,\dots,-1\ |\ *,*)$, since $\{w(\lambda+\rho)\}-\rho_c=\{\sigma-\rho_n^{(j)}\}$, one must have 
\[\rho_n^{(j)}=(\underbrace{0,\dots,0}_{s},-1,\dots,-1\ |\ \frac{n}{2},\frac{n-2s}{2}),\ 0\leqslant s\leqslant n-1.\] 
Now one can get the corresponding spin lowest $K$-types are
\[\begin{cases}  (0,\dots,0\ |\ 2\Theta,1),\ & \text{when}\ 0\leqslant s\leqslant 2\Theta-1\  \\ (0,\dots,0,-1\ |\ 2\Theta+1,1),\  & \text{when}\ 2\Theta \leqslant s\leqslant n-1.\end{cases}\]
\end{proof}

\end{document}